\newtheorem{theo}{Theorem}[section]
\newtheorem{defin}[theo]{Definition}
\newtheorem{lemm}[theo]{Lemma}
\newtheorem{rem}[theo]{Remark}
\numberwithin{equation}{section}
\newcommand{\al}{\alpha}
\newcommand{\be}{\beta}
\newcommand{\Ga}{\Gamma}
\newcommand{\la}{\lambda}
\newcommand{\om}{\omega}
\newcommand{\Om}{\Omega}
\newcommand{\si}{\sigma}
\newcommand{\ep}{\epsilon }
\newcommand{\te}{\theta}
\newcommand{\De}{\Delta}
\newcommand{\de}{\delta}
\newcommand{\pa}{\partial}
\newcommand{\R}{{\mathbb R}^n}
\newcommand{\ri}{\rightarrow}
\newcommand{\Rn}{{\mathbb R}^{n-1}}
\newcommand{\na}{\nabla}
\begin{document}
\baselineskip=18pt

\title[]{Global in time solvability of the  Navier-Stokes equations  in    the half-space 
}

\
\author{Tongkeun Chang}
\address{Department of Mathematics, Yonsei University \\
Seoul, 136-701, South Korea}
\email{chang7357@yonsei.ac.kr}

\author{Bum Ja Jin}
\address{Department of Mathematics, Mokpo National University, Muan-gun 534-729,  South Korea }
\email{bumjajin@mokpo.ac.kr}

\thanks{Bum Ja Jin was supported by NRF-2016R1D1A1B03934133.}

\begin{abstract}
In this  paper, we study the  initial value problem of the Navier-Stokes equations in the half-space. Let a solenoidal  initial velocity be given in  the function space $ \dot{B}_{pq,0}^{\alpha-\frac{2}{2}}({\mathbb R}^n_+)$   for  $\alpha +1 = \frac{n}p + \frac2q$ and $0<\alpha<2$. We prove the global in time existence of weak solution $u\in L^q(0,\infty; \dot B^\alpha_{pq}({\mathbb R}^n_+))\cap L^{q_0}(0, \infty; L^{p_0}({\mathbb R}^n_+)) $   for some $ 1<p_0, q_0<\infty$ with    $\frac{n}{p_0} +\frac2{q_0} =1$, when the  given initial velocity   has small norm in function space $ \dot{B}_{p_0q_0,0}^{-\frac{2}{q_0}}({\mathbb R}^n_+)$. The solution is unique in the class $L^{q_0}(0, \infty; L^{p_0}({\mathbb R}^n_+))$. Pressure estimates are also given.
    \\


\noindent
 2000  {\em Mathematics Subject Classification:}  primary 35K61, secondary 76D07. \\

\noindent {\it Keywords and phrases: Stokes equations, Navier-Stokes equations, Homogeneous initial boundary value,  Half-space. }

\end{abstract}

\maketitle

\section{\bf Introduction}
\setcounter{equation}{0}

In this  paper, we study the following nonstationary Navier--Stokes equations
\begin{align}\label{maineq2}
\begin{array}{l}\vspace{2mm}
u_t - \De u + \na p =-{\rm div}(u\otimes u), \qquad {\rm div} \, u =0 \mbox{ in }
 \R_+\times (0, \infty),\\
\hspace{30mm}u|_{t=0}= h, \qquad  u|_{x_n =0} = 0,
\end{array}
\end{align}
where
 $u=(u_1,\cdots, u_n)$ and $p$ are the unknown velocity and pressure, respectively,
   $     h=(h_1,\cdots, h_n)$ is the given initial data.

Since the nonstationary Navier--Stokes equations are invariant under the scaling
\begin{align*}
u_\la(x,t) = \la u(\la x, \la^2 t),\quad   p_\la (x,t) = \la^2 p(\la x, \la^2t), \quad
h_{\la} = \la h (\la x),
\end{align*}
it is important to
study \eqref{maineq2} in the so-called critical spaces, i.e., the function spaces
with norms invariant under the scaling $u(x,t) \ri \la u(\la x,\la^2 t)$.

There are a number of papers dealt with global well-posedness for \eqref{maineq2} in critical spaces in the half space.
Among them, R. Farwig, H. Sohr and W. Varnhorn  \cite{FSV} showed  that if $h \in \dot B^{-\frac{2}q}_{pq} (\R_+)$ with $\frac{n}p + \frac2q=1$ and $1<p,q<\infty$ has  sufficiently small norm, then \eqref{maineq2} has a unique solution  $u \in L^q(0, \infty; L^p(\R_+))$. Moreover, R. Farwig, Y. Giga and P. Hsu \cite{farwig-giga-hsu}, showed  if $h \in \dot B^{-1 +\frac{n}p}_{pq} (\R_+)$ with  $\frac{n}p + \frac2q =1-2\al, \, \frac12 \geq \al \geq 0$  and $1<p,q<\infty$, then \eqref{maineq2} has a unique solution satisfying
\begin{align*}
t^{2\al}u \in L^q (0, \infty; L^p(\R_+)).
\end{align*}
(Also see \cite{chang-jin2} and   \cite{CJ}).

R. Danchin and P. Zhang in \cite{DZ} have studied global solvability of inhomogeneous Navier-Stokes equations in the half space with bounded density, 
and showed that if the initial velocity in $ \dot B^{-1+\frac{n}p}_{pq} (\R_+) $ with $ \frac{n}3 <p < n$, $1<q<\infty$ and $q\geq \frac{2p}{3p-n}$ is small and initial density in $L^\infty(\R_+)$ is close enough to the homogeneous fluid, then \eqref{maineq2} has a unique solution  satisfying
\begin{align*}
t^{\al} \big( u_t, D_x^2 u, \na p \big) \in L^q(0, \infty; L^p(\R_+)), \ t^{\be}\nabla u\in L^{q_2}(0,\infty;L^{p_2}(\R_+)),\, t^{\gamma}u\in L^{q_3}(0,\infty;L^{p_3}(\R_+))
\end{align*}
for some $ \be, \gamma>0$, $1<p_2,p_3, q_2,q_3<\infty$ with $\al=\be+\gamma$, $\frac{1}{p}=\frac{1}{p_2}+\frac{1}{p_3}$ and $\frac{1}{q}=\frac{1}{q_2}+\frac{1}{q_3}$.

The limiting case $q=\infty$ has been studied by M. Cannone, F.  Planchon, and  M. Schonbek \cite{CPS} for $h \in L^3 ({\mathbb R}^3_+) (\subset B^{-1 +\frac3p}_{p \infty}({\mathbb R}^3_+))$,
by  H. Amann  \cite{amann} for $h \in b^{-1 +\frac{n}p}_{p,\infty} (\R_+)$, $ p > \frac{n}3,p \neq  n$, and by  M. Ri, P. Zhang and Z. Zhang \cite{RZZ} for  $h \in b^{0}_{n \infty} (\R_+)$,
where $b^s_{pq}(\Omega)$ denotes the completion of the generalized Sobolev space $H^s_p(\Omega)$ in $B^s_{pq}(\Omega)$.
In particular, in \cite{CPS}, the solution exists globally in time when $\|h\|_{B^{-1 +\frac3p}_{p \infty}({\mathbb R}^3_+)}$ is small enough.
See also   \cite{fernandes,FGH,giga1,kozono1,Ko-Ya,Ma,sol1} and the references therein for initial   value problem of Navier-Stokes equations in the half space.

Our study in this paper is motivated by the result in \cite{CPS} and  \cite{DZ}.
The following texts state our main results.
\begin{theo}
\label{thm-navier}
Let $0<\al<2$ and  $\al +1 =\frac{n}p +\frac2q$. Assume that  $h\in \dot {B}_{pq,0}^{\al-\frac{2}{q}}(\R_+)$ with $ \mbox{\rm div}\, h=0$. Then,    there is $\epsilon_*>0$  and    $ 1 <p_0, \, q_0<\infty$ with $\frac{n}{p_0}+\frac{2}{q_0}=1$ so that   if $\|h\|_{ \dot {B}_{p_0q_0,0}^{-\frac{2}{q_0}}(\R_+) }<\ep_*,$ then
 \eqref{maineq2} has a solution
 $u\in L^q(0,\infty;\dot B^\al_{pq} (\R_+))\cap    L^{q_0}(0, \infty; L^{p_0}(\R_+))$. 
The solution is unique in $L^{q_0}(0, \infty; L^{p_0}(\R_+)).$

\end{theo}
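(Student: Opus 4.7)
The plan is to recast \eqref{maineq2} as the integral equation
\[
u(t) = S(t) h - \int_0^t S(t-s)\, \mathbb{P}\, \mathrm{div}(u\otimes u)(s)\, ds,
\]
where $S(t)$ denotes the Stokes semigroup in $\R_+$ with homogeneous Dirichlet data and $\mathbb{P}$ is the Helmholtz projector, and to construct the solution in two stages: first at the critical scaling level $L^{q_0}(0,\infty;L^{p_0}(\R_+))$ by a fixed-point argument exploiting the smallness assumption, and then to bootstrap the additional regularity $L^q(0,\infty;\dot B^\al_{pq}(\R_+))$ out of the assumption $h\in \dot B^{\al-2/q}_{pq,0}(\R_+)$.

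The two linear ingredients that I will need are, first, the end-point Stokes estimate
\[
\|S(\cdot) h\|_{L^{q_0}(0,\infty;L^{p_0}(\R_+))} \le C \|h\|_{\dot B^{-2/q_0}_{p_0 q_0, 0}(\R_+)},
\]
already implicit in the work of Farwig--Sohr--Varnhorn \cite{FSV}, and its higher-regularity analogue
\[
\|S(\cdot) h\|_{L^{q}(0,\infty;\dot B^\al_{pq}(\R_+))} \le C \|h\|_{\dot B^{\al-2/q}_{pq, 0}(\R_+)},
\]
which follows from maximal regularity of the Stokes operator in the half-space combined with the real-interpolation characterization of $\dot B^s_{pq,0}(\R_+)$. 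I would choose the pair $(p_0,q_0)$ on the scaling line $n/p_0+2/q_0=1$ so that H\"older in $x$ and Hardy--Littlewood--Sobolev in $t$ give the bilinear bound
\[
\Big\| \int_0^t S(t-s) \mathbb{P}\,\mathrm{div}(u\otimes v)(s)\, ds \Big\|_{L^{q_0}_t L^{p_0}_x} \le C \|u\|_{L^{q_0}_t L^{p_0}_x} \|v\|_{L^{q_0}_t L^{p_0}_x},
\]
after absorbing the $(t-s)^{-1/2}$ smoothing produced by the $\mathrm{div}$-type kernel. Banach's fixed point theorem on a small ball in $L^{q_0}(L^{p_0})$ then delivers existence and unconditional uniqueness in that class once $\ep_*$ is small enough.

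To promote the solution into $L^q(0,\infty;\dot B^\al_{pq}(\R_+))$, I would apply the second linear estimate to $S(t)h$ and prove the companion bilinear estimate
\[
\Big\| \int_0^t S(t-s) \mathbb{P}\,\mathrm{div}(u\otimes u)(s)\, ds \Big\|_{L^q_t \dot B^\al_{pq,x}} \le C \|u\|_{L^{q_0}_t L^{p_0}_x} \|u\|_{L^q_t \dot B^\al_{pq,x}},
\]
using a Stokes estimate in Besov spaces together with a Sobolev-type relation between $\dot B^\al_{pq}$ and $L^{p_0}$, so that $u\otimes u$ lies in the negative-regularity input class of the Stokes resolvent. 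The main obstacle, and the technical heart of the argument, is precisely this half-space Besov bilinear estimate: unlike on the whole space, the Stokes kernel carries a non-local boundary correction that is not a spatial convolution, so proving the product law together with the associated gain of a derivative requires either a careful splitting into tangential and normal components using Solonnikov-type explicit kernel formulas, or a duality argument against trace-vanishing test functions in $\dot B^{-\al}_{p'q'}$. Once this estimate is available the bootstrap is a direct contraction argument, and uniqueness in $L^{q_0}(L^{p_0})$ reduces to a standard Gronwall application to the difference of two solutions, completing the proof of Theorem \ref{thm-navier}.
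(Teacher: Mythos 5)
Your plan is essentially the paper's proof: a two-level iteration resting on the linear Stokes estimate for forcing $f=\mathrm{div}\,{\mathcal F}$ in half-space Besov spaces (the paper's Theorem \ref{thm-stokes}, proved via Solonnikov's kernel formulas and the Helmholtz projection, exactly the route you indicate for your ``technical heart''), combined with the H\"older-type product estimate of Lemma \ref{0510prop} to get $\|u\otimes u\|_{L^{q_1}(0,\infty;\dot B^{\be}_{p_1q}(\R_+))}\le c\|u\|_{L^{q_0}(0,\infty;L^{p_0}(\R_+))}\|u\|_{L^{q}(0,\infty;\dot B^{\al}_{pq}(\R_+))}$; the paper runs this as successive approximations $u^{m+1}$ with induction and a telescoping series rather than a Banach fixed point, which is only a cosmetic difference, and your observation that ${\mathcal F}=u\otimes u$ vanishes on $x_n=0$ (needed for the linear estimate) is automatic from the Dirichlet condition.

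The one step where your write-up, as stated, would not go through is uniqueness: for two solutions in the critical class $L^{q_0}(0,\infty;L^{p_0}(\R_+))$ with no smallness assumed of the second one, the difference $w=u-u_1$ only satisfies an absorptive inequality of the form $\|w\|_{L^{q_0}(0,\tau;L^{p_0})}\le c\big(\|u\|_{L^{q_0}(0,\tau;L^{p_0})}+\|u_1\|_{L^{q_0}(0,\tau;L^{p_0})}\big)\|w\|_{L^{q_0}(0,\tau;L^{p_0})}$, which is not of Gronwall type (there is no integrable-in-time linear factor to integrate). The paper's mechanism, which is the standard fix, is absolute continuity of $t\mapsto\int_0^t\|u(s)\|_{L^{p_0}}^{q_0}ds$: choose $\de>0$ so that on every interval of length $\de$ the prefactor is $<1$, absorb to get $w\equiv 0$ on $(0,\de]$, then restart the Stokes estimate from $t=\de$ and iterate over $(\de,2\de]$, etc. With that replacement your proposal matches the paper's argument.
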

In particular, if $p =q$, then the velocity $u$ is contained in $\dot B^{\al, \frac{\al}2}_{pp} (\R_+ \times (0, \infty)) = L^p (0, \infty; \dot B^{\al}_{pp}) \cap L^p(\R_+: \dot B^{\frac{\al}2}_{pp}(0, \infty))$ (see \cite{CJ3}).

\begin{theo}\label{maintheopressure}
Let $(p,q,\al)$ and $h$ be conditions in Theorem \ref{thm-navier}. Then, there is  $(p_1,q_1,\be)$ with  $1<p_1< p,\ 1< q_1< q$,  and (1) $0 < \be \leq  \al $ if $0 < \al \leq 1$  and (2) $1 < \be \leq  \al$ if $ 1 < \al \leq 2$  so that
if $\al>\frac{1}{p}$, then  the corresponding pressure $p$ can be decomposed by  $p=P_0+\sum_{j=1}^{n-1}D_{x_j}P_j+D_tp_1$ for some  $p_1\in L^q(0,\infty;\dot B^{\al+1}_{pq} (\R_+))$,  $P_j\in L^q(0,\infty;\dot B^{\al}_{pq} (\R_+))$, $P_0\in L^{q_1}(0,\infty;\dot B^\be_{p_1 q_1} (\R_+))$. 

\end{theo}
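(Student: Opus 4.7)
The plan is to use the explicit pressure representation for the nonstationary Stokes system in the half-space with forcing $-\mathrm{div}(u\otimes u)$, and then convert a bilinear estimate on $u\otimes u$ into the required component-wise pressure estimates.

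The first step is to split the nonlinear forcing as
\begin{align*}
-\mathrm{div}(u\otimes u)=-\sum_{j=1}^{n-1}D_{x_j}(u_ju)-D_{x_n}(u_nu).
\end{align*}
The tangential sum already carries horizontal-derivative structure, so through the half-space Stokes pressure representation developed in \cite{CJ3} and \cite{chang-jin2} it contributes pressure pieces of the form $D_{x_j}P_j$ for $1\le j\le n-1$. For the normal index I would use incompressibility, $D_{x_n}u_n=-\sum_{j<n}D_{x_j}u_j$, to rewrite as much as possible of $D_{x_n}(u_nu)$ as an additional tangential divergence, and trade the remaining genuinely normal commutator for a $D_tp_1$ contribution by reading the Stokes equation as $u_t=\De u-\na p+\mathrm{div}(u\otimes u)$. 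The residual boundary-trace correction, which stems from the half-space Poisson kernel rather than the volume potential and cannot be absorbed in either of the above forms, is collected into $P_0$.

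The second step is a bilinear estimate on $u\otimes u$. Paraproduct decomposition, combined with the two a priori bounds $u\in L^q(0,\infty;\dot B^\al_{pq}(\R_+))$ and $u\in L^{q_0}(0,\infty;L^{p_0}(\R_+))$ from Theorem \ref{thm-navier}, yields
\begin{align*}
\|u\otimes u\|_{L^{q_1}(0,\infty;\dot B^\be_{p_1q_1}(\R_+))}\lesssim \|u\|_{L^q(0,\infty;\dot B^\al_{pq})}\|u\|_{L^{q_0}(0,\infty;L^{p_0})},
\end{align*}
with H\"older-dictated exponents $\frac1{p_1}=\frac1p+\frac1{p_0}$ and $\frac1{q_1}=\frac1q+\frac1{q_0}$. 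The assumption $\al>1/p$ ensures that $\dot B^\al_{pq}$ acts as a multiplier on the $L^{p_0}$ scale without a logarithmic loss. When $0<\al\le1$, the Bony product rule caps the resulting regularity at $0<\be\le\al$; when $1<\al\le2$, one derivative may be shifted onto the low-regularity factor through $\na u$, raising the product's Besov index to $1<\be\le\al$.

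The third step is to apply the Stokes pressure regularity theory from \cite{CJ3} with the bilinear bound above as the forcing input. This matches the three pieces of the decomposition to their natural scales and yields $P_0\in L^{q_1}(0,\infty;\dot B^\be_{p_1q_1}(\R_+))$, $P_j\in L^q(0,\infty;\dot B^\al_{pq}(\R_+))$ for $1\le j\le n-1$, and $p_1\in L^q(0,\infty;\dot B^{\al+1}_{pq}(\R_+))$. The main obstacle will be arranging the splitting so that each piece lands on the correct Stokes regularity scale while respecting the case distinction at $\al=1$: this threshold is exactly where Bony's paraproduct changes behaviour, governing whether one full derivative can be transferred onto the roughest factor of $u\otimes u$ without loss.
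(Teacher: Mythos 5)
Your overall skeleton agrees with the paper's: the pressure decomposition and bound come from applying the linear Stokes pressure estimate \eqref{main0910} of Theorem \ref{thm-stokes} with ${\mathcal F}=-u\otimes u$ (admissible since $u|_{x_n=0}=0$ gives ${\mathcal F}|_{x_n=0}=0$), together with a bilinear estimate on $u\otimes u$ in terms of the two a priori norms of $u$. Your first step, however, is not how the decomposition arises and is not needed: in the paper $P_0$ is the Helmholtz gradient part ${\mathbb Q}\,{\rm div}(u\otimes u)$ of the forcing, while the terms $D_{x_j}P_j$ ($j\neq n$) and $D_t p_1$ come from the structure of Solonnikov's pressure kernel \eqref{formulas-p} (only tangential $x_j$-derivatives occur there, and the time derivative appears after integrating by parts in $y_n$ and using the heat equation), not from splitting ${\rm div}(u\otimes u)$ into tangential and normal parts; since Theorem \ref{thm-stokes} already packages this, you should simply invoke it. Also, $\al>\frac1p$ is used for the boundary traces (Lemma \ref{trace} inside Lemmas \ref{lemma-1-2-zero} and \ref{lemma-1-2}), not for any multiplier property of $\dot B^\al_{pq}$ over $L^{p_0}$.

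The genuine gap is the exponent bookkeeping in your second step. Theorem \ref{thm-stokes} and Lemma \ref{0929-1} require the forcing to lie on the scaling line $0=\al-\be-1+n(\frac1{p_1}-\frac1{p})+\frac2{q_1}-\frac2{q}$. With your choices $\frac1{p_1}=\frac1p+\frac1{p_0}$, $\frac1{q_1}=\frac1q+\frac1{q_0}$ and $\frac{n}{p_0}+\frac2{q_0}=1$, this identity forces $\be=\al$: your claimed freedom to take $\be<\al$ (and the whole case distinction at $\al=1$ as you describe it) is incompatible with the linear estimate you want to apply, while $\be=\al$ itself violates the hypothesis $\al\le\be+1\le2$ of Theorem \ref{thm-stokes} as soon as $\al>1$, and for small $n$ it may not even give $p_1>1$. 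The missing ingredient is the paper's choice in Subsection \ref{subsectionpqs} and estimate \eqref{0510-3-1}: one first lowers the regularity by the embedding $\dot B^{\al}_{pq}(\R_+)\hookrightarrow \dot B^{\be}_{p_2q}(\R_+)$ with $\be-\frac{n}{p_2}=\al-\frac{n}{p}$, $p_2>p$, and only then applies the product inequality (Lemma \ref{0510prop}) with $\frac1{p_1}=\frac1{p_2}+\frac1{p_0}$ and H\"older in time with $\frac1{q_1}=\frac1q+\frac1{q_0}$; this keeps $(p_1,q_1,\be)$ on the admissible scaling line while achieving $\be<\al$ in the ranges (1)--(2). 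Without this intermediate embedding (or an equivalent device), your third step cannot be carried out as stated.
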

The explanation of function spaces and notations is placed in Section \ref{notation}.

\begin{tikzpicture}[scale=3]

\label{figure}

 \draw (-2,-0.6) -- (2,-0.6);

\draw (-1.0, 1.0) -- (-1.0,-1.0);

\draw(-0.3, 0.60)-- (0.9,-0.6);
\draw(-1.0, 0.6)-- (-0.3,0.6);

\draw (1.3,-0.7 ) node {$\frac{n}p$};

\draw (0.9,-0.7 ) node {$\frac{n}p =3 $};

\draw (-0.3,-0.7 ) node {$\frac{n}p =1 $};

\draw(-1.0, 0.03) -- ( -0.3, -0.6);

\draw(-0.3, -0.6) -- (-0.3, 0.6);

\draw(-1.2, 0.1) node {$\frac{2}q =1$};

\draw(-1.2, 0.6) node {$\frac{2}q =2$};

\draw(-1.1, 1.0) node {$\frac{2}q $};

\draw(-0.8, -0.4) node { {\Large I} };

\draw(0.1, -0.25) node { {\Large II} };

\draw(-0.65, 0.08) node { {\Large III} };



\draw(0.0, -1.2) node {{\bf  Figure: Region of  $(\frac{n}p, \frac2q)$} };

\end{tikzpicture}

\begin{rem}

\begin{itemize}
\item[(1)]
The authors in the papers \cite{farwig-giga-hsu} and  \cite{FSV} studied \eqref{maineq2} when the initial data $h$ is in space $\dot B^{-1 +\frac{n}p}_{pq,0}(\R_+)$ with $(\frac{n}p, \frac2q)$ in I and the authors in the paper \cite{DZ} studied  \eqref{maineq2} when  $(\frac{n}p, \frac2q)$ is in   II. In this paper, we  study  \eqref{maineq2} when $(\frac{n}p, \frac2q)$ is in  II $\cup$ III.

\item[(2)]
Note that if $(\frac{n}p, \frac2q)$ is  in I and III, then $-1 +\frac{n}p < 0$.

\end{itemize}
\end{rem}

For the proof of Theorem \ref{thm-navier}, it is necessary to study the following initial value problem of the Stokes equations in $\R_+\times (0,\infty)$:
\begin{align}\label{maineq-stokes}
\begin{array}{l}\vspace{2mm}
u_t - \De u + \na p =f, \qquad {\rm div} \, u =0 \mbox{ in }
 \R_+\times (0,\infty),\\
\hspace{30mm}u|_{t=0}= h, \qquad  u|_{x_n =0} = 0,
\end{array}
\end{align}
where $f=\mbox{div}{\mathcal F}$.

In \cite{GGS}, M. Giga, Y. Giga and H. Sohr showed that if $f \in L^q(0, T; \hat D(A^{-\al}_p(\Om))$ and $h=0$ then the solution $u$ of Stokes equations  \eqref{maineq-stokes} satisfies that for $0 < \al <1$,
\begin{align*}
\int_0^T \Big( \| (\frac{d}{dt})^{1-\al}  u(t) \|^q_{L^p (\Om)} + \| A^{1-\al}_p u(t)\|^q_{L^p (\Om)}  \Big)dt \leq c(p,q, \Om,\al) \int_0^T \|A_p^{-\al}f (t) \|^q_{L^p (\Om)} dt,
\end{align*}
where   $A_p $ is Stokes operator and $\Om$ is bounded domain, exterior domain or half space. In particular,  if $f ={\rm div}\, {\mathcal F}$ with ${\mathcal F} \in L^q(0, T; L^p_\si (\Om))$ then
\begin{align*}
\int_0^T \Big(\| (\frac{d}{dt})^{\frac12}  u(t) \|^q_{L^p (\Om)} + \| \na u(t)\|^q_{L^p (\Om)} \Big) dt \leq c(p,q, \Om) \int_0^T \|{\mathcal F}(t) \|^q_{L^p (\Om)} dt.
\end{align*}
Estimates for the pressure were, however, not given in \cite{GGS}.

H. Koch and V. A. Solonnikov~\cite{KS} showed  the unique local in time existence of solution $u \in L^q(0,T; L^q({\mathbb R}^3_+))$  of \eqref{maineq-stokes}
when $f=\mbox{div}{\mathcal F}$, ${\mathcal F}\in L^q(0,T; L^q({\mathbb R}^3_+))$ and  $h=0$. They also showed that the corresponding pressure $p$ is decomposed by $p = p_1 + \frac{\pa P}{\pa t}$, where $p_1$ and $P$ satisfy $\| p_1\|_{L^q(0, T; L^q({\mathbb R}^3_+))} + \| P\|_{L^q(0, T; W^2_q({\mathbb R}^3_+))} \leq c \| {\mathcal F}\|_{L^q(0, T; L^q ({\mathbb R}^3_+))}$. 
See also  \cite{giga1,KS1,kozono1,sol1,Sol-2} and the references therein.

The following theorem states our result on the unique solvability of the Stokes equations \eqref{maineq-stokes}.
\begin{theo}\label{thm-stokes}
Let $1 < p,q < \infty$ and $0\leq \al\leq 2$.
Let  ${\mathcal F} \in L^{q_1} (0, \infty, \dot B^{\be}_{p_1 q}(\R_+))$ for some $(p_1,q_1,\be)$ satisfying  $1 < p_1 \leq p ,  \, 1 < q_1 \leq q$, $0\leq \beta\leq \al\leq\be+1\leq 2$ and  $ 0=\al -\be -1 +n (\frac{1}{p_1}-\frac{1}{p})+\frac2{q_1}-\frac2{q}$. Moreover assume that  ${\mathcal F}|_{x_n =0} =0$.  Then
  there is a solution  $u$ of \eqref{maineq-stokes}   with
\begin{align}\label{0411-1-1}
\|   u \|_{L^q (0, \infty;  \dot B^{\al}_{pq}({\mathbb R}^n_+))}
   \leq   c\big(  \|    h\|_{\dot B^{\al -\frac2q}_{pq,0} ({\mathbb R}^n_+)} + \| {\mathcal F}\|_{L^{q_1} (0, \infty, \dot B^{\be }_{p_1q}(\R_+))}\big).
\end{align}

The corresponding pressure $p$ can be decomposed by  $p=D_tp_1+\sum_{j=1}^{n-1}D_{x_j}P_j+P_0$,  $p_1\in L^q(0,\infty;\dot B^{\al+1}_{pq} (\R_+))$, $P_j\in L^q(0,\infty;\dot B^{\al}_{pq} (\R_+))$ and  $P_0\in L^{q_1}(0,\infty;\dot B^\be_{p_1q} (\R_+))$ with
\begin{align}\label{main0910}
\notag \|p_1\|_{ L^q(0,\infty;\dot B^{\al+1}_{pq} (\R_+))}+\sum_{j=1}^{n-1}\|P_j\|_{L^q(0,\infty;\dot B^{\al}_{pq} (\R_+))}+\|P_0\|_{L^{q_1}(0,\infty;\dot B^\be_{p_1q} (\R_+))}\\
\leq c\big(  \|    h
\|_{\dot B^{\al -\frac2q}_{pq,0} ({\mathbb R}^n_+)} + \| {\mathcal F}\|_{L^{q_1} (0, \infty, \dot B^{\be }_{p_1q}(\R_+))}\big).
\end{align}

\end{theo}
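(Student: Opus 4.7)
My plan is to split $u = v + w$, where $v$ solves the Stokes system with initial data $h$ and zero forcing, and $w$ solves with zero initial data and forcing $\mathrm{div}\,\mathcal F$. The velocity estimate \eqref{0411-1-1} and the pressure decomposition \eqref{main0910} will then be obtained by assembling separate bounds for each piece.

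For the homogeneous initial value part $v$, I would represent $v(t) = e^{-tA}h$ through the half-space Stokes semigroup $e^{-tA}$, whose analyticity on $L^p_\sigma(\mathbb R^n_+)$ is classical. The space $\dot B^{\alpha - 2/q}_{pq,0}(\mathbb R^n_+)$ admits a characterization by the Stokes semigroup analogous to the heat-semigroup characterization of Besov spaces on $\mathbb R^n$, which yields $\|v\|_{L^q(0,\infty;\dot B^\alpha_{pq})} \leq c\|h\|_{\dot B^{\alpha - 2/q}_{pq,0}}$ directly. Since $v$ is solenoidal, its pressure contribution is the Helmholtz correction of $\Delta v$, which can be written as a time derivative of a harmonic potential and absorbed into the $D_t p_1$ slot.

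For the forcing part $w$, I would use the explicit half-space Green-tensor representation of Solonnikov type (see for instance \cite{KS,Sol-2}). Applying tangential Fourier transform and handling the normal variable explicitly, both $w$ and the corresponding pressure split into tangential convolutions against kernels of definite parabolic homogeneity, plus boundary correctors. The velocity estimate then follows from (i) Hardy--Littlewood--Sobolev in the tangential variables and time, using the scaling identity $\alpha - \beta - 1 + n(\frac{1}{p_1} - \frac{1}{p}) + \frac{2}{q_1} - \frac{2}{q} = 0$ to bridge $(p_1,q_1)$ and $(p,q)$, combined with (ii) vector-valued Calder\'on--Zygmund in the normal variable; the hypothesis $\mathcal F|_{x_n=0}=0$ removes the leading boundary trace that would otherwise obstruct the bound at this critical relation.

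For the pressure, I would start from the fact that $p$ satisfies an overdetermined Neumann problem whose data consist of second tangential derivatives of $\mathcal F$ together with $\partial_n u_n|_{x_n = 0}$. After writing $p$ via its Poisson-type representation, three pieces appear naturally: a time antiderivative $p_1$ whose $D_t$ carries the contribution driven by the normal trace of $u$; tangential derivatives $\sum_{j=1}^{n-1} D_{x_j} P_j$ obtained by integrating the tangential components of $\mathcal F$ by parts, where $\mathcal F|_{x_n=0} = 0$ kills the boundary contribution; and a residual $P_0$ that inherits the integrability $(p_1,q_1)$ and regularity $\beta$ of $\mathcal F$. The main obstacle is precisely this step: the half-space Stokes pressure is non-local and couples to the boundary through a harmonic extension, so isolating each of the three pieces with the sharp Besov regularity stated in \eqref{main0910} demands a careful analysis of the explicit kernels and repeated use of Sobolev-type embeddings between the $(p_1,q_1,\beta)$ and $(p,q,\alpha)$ scales.
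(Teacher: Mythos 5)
Your overall architecture---splitting $u=v+w$ into the homogeneous initial-value part and the zero-data forced part, representing the forced part by the Solonnikov Green tensor, and closing the velocity bound \eqref{0411-1-1} by Hardy--Littlewood--Sobolev in time together with Calder\'on--Zygmund estimates and the scaling relation $\al-\be-1+n(\frac1{p_1}-\frac1p)+\frac2{q_1}-\frac2q=0$---matches the paper's proof, and that half of the plan is workable. The genuine gap is the pressure bound \eqref{main0910}, which is the substantive new content of the theorem, and you leave exactly this step as an acknowledged ``main obstacle'' without the key idea. The paper's mechanism is to Helmholtz-decompose the forcing, $f=\mathbb{P}f+\nabla\mathbb{Q}f$, \emph{before} applying the Green tensor: Lemma \ref{hemoz} shows that $\mathbb{Q}\,{\rm div}\,\mathcal F$ is bounded in $\dot B^{\be}_{p_1q}({\mathbb R}^n_+)$ precisely because $\mathcal F|_{x_n=0}=0$ (the $k=0$ case uses the normal-trace lemma for divergence-free fields), and this gradient part \emph{is} $P_0$, carrying the $(p_1,q_1,\be)$ regularity; the remaining Green-tensor pressure $\Pi_0$, driven by $\mathbb{P}f$, is then rewritten by integration by parts and the heat-equation identity $D_{y_n}^2\Gamma=D_t\Gamma-\Delta'\Gamma$ as $\sum_{j}D_{x_j}\Pi_{0j}+D_t\Pi_{00}$, whose boundary traces are controlled through the Poisson operator and the trace lemma (this is where the restriction $\al>\frac1p$ enters, cf.\ Lemmas \ref{lemma-1-2-zero} and \ref{lemma-1-2}). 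The same projection is what makes the velocity estimate run, since Lemma \ref{0929-1} is an estimate for $\Gamma*\mathbb{P}f$ and $\Gamma^{*}*\mathbb{P}f$, not for $\Gamma*f$. Your alternative of an ``overdetermined Neumann problem'' for $p$ is not implausible, but as written it restates the difficulty rather than resolving it: no argument is given that the three pieces can be isolated with the stated Besov regularities.

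A second, smaller inaccuracy concerns the homogeneous part: you claim its pressure ``can be written as a time derivative of a harmonic potential and absorbed into the $D_tp_1$ slot.'' That is not what happens, and it is doubtful it can: in the paper the pressure $\pi$ generated by $h$ splits into tangential-derivative terms $\sum_{j}D_{x_j}\pi_{0j}$ \emph{plus} a time-derivative term $D_t\pi_{00}$ (Lemma \ref{lemma-1-2-zero}); the tangential pieces are Poisson extensions of boundary traces of $\Gamma_t*h$ and only have the regularity $\al$ of the $P_j$ slots, so forcing them into $D_tp_1$ with $p_1\in L^q(0,\infty;\dot B^{\al+1}_{pq}({\mathbb R}^n_+))$ would require a time antiderivative gaining one spatial derivative, which you do not justify. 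Relatedly, the characterization of $\dot B^{\al-\frac2q}_{pq,0}({\mathbb R}^n_+)$ by the half-space Stokes semigroup that you invoke for $v$ is not off-the-shelf in this homogeneous, zero-extension setting; the paper instead estimates the explicit solution formula using the bounds of \cite{CK} and real interpolation.
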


We organize this paper as follows.
In Section \ref{notation}, we introduce  the function spaces, definition of the weak solutions of Stokes equations and Navier-Stokes equations.  In Section \ref{preliminary},   the various  estimates of operators related with  Newtonian  kernel and Gaussian kernel are given.  
In Section \ref{theoremthm-stokes}, we  complete the proof of Theorem \ref{thm-stokes}. In Section \ref{nonlinear}, we give the proof of Theorem \ref{thm-navier} and Theorem \ref{maintheopressure} applying the estimates in  Theorem \ref{thm-stokes} to the  approximate solutions.

\section{Notations, Function spaces and  Definitions of weak solutions}

\label{notation}
We denote by  $x'$ and $x=(x',x_n)$ the points of the spaces $\Rn$ and $\R$, respectively.
The multiple derivatives are denoted by $ D^{k}_x D^{m}_t = \frac{\pa^{|k|}}{\pa x^{k}} \frac{\pa^{m} }{\pa t}$ for multi-index
$ k$ and nonnegative integers $ m$.
Throughout this paper we denote by $c$ various generic constants. 

%

For $s\in {\mathbb R}$ and $1\leq p,q\leq \infty$,
 we denote   $\dot H^s_{p}(\R)$ and $\dot{B}^s_{pq}(\R)$  the generalized homogeneous Sobolev spaces(space of Bessel potentials) and the homogeneous Besov spaces in $\R$, respectively (see \cite{BL,Tr} for the definition of function spaces).  
 Denote by $\dot H^s_p(\R_+)$ and $ \dot B^s_{pq}(\R_+)$  the restrictions of $\dot H^s_p (\R)$ and $  \dot B^s_{pq} (\R)$, respectively,    with norms
 \begin{align*}
\| f \|_{\dot H^s_p(\R_+) }  = \inf \{ \| F\|_{ \dot H^s_p(\R)}\, | \, F|_{\R_+} =f, \,\,  F \in  \dot H^s_p(\R) \},\\
\| f \|_{\dot B^s_{pq}(\R_+) }  = \inf \{ \| F\|_{ \dot B^s_{pq}(\R)}\, | \, F|_{\R_+} =f, \,\, F \in  \dot B^s_{pq}(\R) \}.
 \end{align*}
For    a non-negative integer $k$, $\dot H^k_p(\R_+)
 = \{ f \, | \,   \sum_{|l| =k} \| D^l f\|_{L^p (\R_+)} < \infty \}.$
In particular, $\dot H^0_p(\R_+) = L^p (\R_+)$. 


For $s\in {\mathbb R}$, we denote by  $\dot{B}^s_{pq}(\R_+), 1\leq p,q\leq \infty$  the  usual homogeneous Besov space in $\R_+$   and
 denote by
 \begin{align*}
 \dot{B}^s_{pq,0}(\R_+ ) & = \{ f \in  \dot{B}^s_{pq}(\R_+) \, | \,  \tilde f \in \dot B^s_{pq} ({\mathbb R}^n) \},
 \end{align*}
where $ \tilde f$ is zero extension of $f$ over $\R$. Note that $\| \tilde f \|_{\dot B^s_{pq} (\R)} \leq c \| f\|_{\dot B^s_{pq,0} (\R_+)}$.

Note that for $s \geq 0$,   $\dot B^{-s}_{pq, 0} (\R_+)$ is the dual space of  $\dot B^{s}_{p'q'} (\R_+)$, that is,  $\dot B^{-s}_{pq, 0} (\R_+) = (\dot B^s_{p' q'}(\R_+))^*$, where $\frac1p + \frac1{p'} =1$ and $ \frac1q +\frac1{q'} =1$.  Note that for $f \in  \dot B^s_{pq, 0} (\R_+)$, the zero extension $\tilde f$ of $f$ is function in $ \dot B^s_{pq } (\R)$ with $\| \tilde f\|_{\dot B^s_{pq} (\R)} \leq c \| f\|_{\dot B^s_{pq}(\R_+)}$.

For the Banach space $X$, we denote  by $L^q(0, \infty ;X), 1\leq q\leq \infty$  the usual Bochner space with norm
\begin{align*}
\| f\|_{L^q(0, \infty; X)} : = \big( \int_0^\infty \| f(t) \|_X^q dt \big)^\frac1q.
\end{align*}
%

For  $1<q<\infty$ and   $0< \theta<1$, we    denote by   $(X,Y)_{\theta,q}$ and $[X,Y]_\theta$ the real interpolation and complex interpolation, respectively, of the Banach space $X$ and $Y$. In particular, for $ 0< \te < 1 $, $  \al, \al_1, \al_2 \in {\mathbb R} $ and  $1 < p_1, p_2, q_1, q_2,  p, q, r < \infty$,
\begin{align}
\label{interpolation1}
[\dot H^{\al_1}_{p_1}(\R_+), \dot H^{\al_2}_{p_2}(\R_+) ]_\te = \dot H^\al_p(\R_+), \qquad
(\dot H^{\al_1}_{p}(\R_+), \dot H^{\al_2}_{p} (\R_+))_{\te, r} = \dot B^\al_{pr}(\R_+),\\
\label{interpolation1-2}
(\dot B^{\al_1}_{pq}(\R_+), \dot H^{\al_2}_{pq} (\R_+))_{\te, r} = \dot B^\al_{pr}(\R_+),\\
\label{interpolation2}
[ L^{q_1}(0, \infty; X), L^{q_2} (0, \infty; Y)]_\te = L^q (0, \infty; [X,Y]_\te),\\
\label{realinterpolation2}
( L^{q_1}(0, \infty; X), L^{q_2} (0, \infty; Y))_{\te,q} = L^q (0, \infty; (X,Y)_{\te,q}),
\end{align}
when $\al = \te \al_1 + (1 -\te) \al_2$,  $\frac1p = \frac{\te}{p_1} + \frac{1 -\te}{p_2}$ and $\frac1q = \frac{\te}{q_1} + \frac{1 -\te}{q_2}$.
See Theorem 6.4.5, Theorem 5.1.2 and Theorem 5.6.2 in \cite{BL}.

\begin{defin}[Weak solution of  the Stokes equations]
\label{stokesdefinition}
Let  $1<p,q<\infty$ and $0\leq \al\leq 2$.
Let $h,   {\mathcal F}$ satisfy the same hypotheses as in Theorem \ref{thm-stokes}.
A vector field $u\in L^q(0,\infty; \dot H^\al_p(\R_+))$  is called a weak solution of the Stokes equations \eqref{maineq-stokes} if the following conditions are satisfied:
 \begin{align*}
-\int^\infty_0\int_{\R_+}u\cdot \Delta \Phi dxdt&=\int^\infty_0\int_{\R_+} \big( u\cdot \Phi_t-{\mathcal F}:\nabla \Phi \big) dxdt
-\int_{\R_+} h(x) \cdot \Phi(x,0) dx
\end{align*}
for each $\Phi\in C^\infty_0(\overline{\R_+}\times [0,\infty))$ with ${\rm div} _x\Phi=0$, $\Phi\big|_{x_n=0}=0$. In addition, for each $\Psi\in C^1_c(\overline{\R_+})$
\begin{equation}\label{Stokes-bvp-2200}
\int_{\R_+} u(x,t) \cdot \na \Psi(x) dx =0  \quad  \mbox{ for all}
\quad 0 < t< \infty.
\end{equation}
\end{defin}

\begin{defin}[Weak solution to the Navier-Stokes equations] Let $1<p,q<\infty$ and $0 \leq \al \leq 2$ with $\al +1= \frac{n}{p}  +\frac{2}{q}$. 
Let $h$ satisfy the same hypothesis as in Theorem \ref{thm-navier}.
A vector field $u\in L^q (0,\infty; \dot H^\al_p(\R_+))$ is called a weak solution of the Navier-Stokes equations \eqref{maineq2} if the following variational formulations are satisfied:\\
 \begin{align}\label{weaksolution-NS}
-\int^\infty_0\int_{\R_+}u\cdot \Delta \Phi dxdt&=\int^\infty_0\int_{\R_+} \big(u\cdot \Phi_t+(u\otimes u):\nabla \Phi \big)  dxdt
-\int_{\R_+} h(x) \cdot \Phi(x,0) dx
\end{align}
for each $\Phi\in C^\infty_0(\overline{\R_+}\times [0,\infty))$ with $\mbox{\rm div} _x\Phi=0$, $\Phi\big|_{x_n=0}=0$. In addition, for each $\Psi\in C^1_c(\overline{\R_+})$, $u$ satisfies  \eqref{Stokes-bvp-2200}.
\end{defin}

\begin{rem}
If  $ 0< \al < \frac2q$, then the term
$\int_{\R_+}h(x) \cdot  \Phi (x, 0)dx$
 should be replaced by
$<h,\Phi(\cdot, 0)>$, where $<\cdot,\cdot>$ 
is the duality pairing between  $\dot B^{\al-\frac2q}_{pq,0}(\R_+ )$ and $\dot B^{-\al+\frac2q}_{p'q'}(\R_+ )$.

\end{rem}

\section{\bf Preliminary Estimates.}

\label{preliminary}
\setcounter{equation}{0}

\subsection{Trace theorem}

The following lemma is well known trace theorem  (see the proof of   Theorem 6.6.1  in \cite{BL} for (1) and Theorem 3.2.2  in \cite{galdi} for (2)).
\begin{lemm}\label{trace}
Let $1< p, q< \infty$.
\begin{itemize}
\item[(1)]
If   $ f \in \dot B^{\al}_{pq}(\R_+)$  for $\al > \frac1p$, then $f |_{x_n =0} \in \dot B^{\al -\frac1p}_{pq} (\Rn)$ with
\begin{align*}
 \| f |_{x_n =0} \| _{\dot B_{pp}^{\al -\frac1p}(\Rn)} \leq c \| f\|_{\dot H^\al_{p} (\R_+)}, \,\,\,  \| f |_{x_n =0} \| _{\dot B_{pq}^{\al -\frac1p}(\Rn)} \leq c \| f\|_{\dot B^\al_{pq} (\R_+)}.
\end{align*}
\item[(2)]
If $f \in L^p (\R_+)$ and  ${\rm div }\, f =0$ in $\R_+$, then   $f_n |_{x_n =0} \in \dot B^{-\frac1p}_{pp} (\Rn)$    with
\begin{align*}
\| f_n|_{x_n =0} \|_{\dot B^{-\frac1p}_{pp}(\Rn)} \leq c\| f\|_{L^p (\R_+)}.
\end{align*}
\end{itemize}

\end{lemm}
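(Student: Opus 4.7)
The plan is to handle the two parts by quite different arguments: for (1), a Littlewood--Paley reduction to the whole-space trace theorem; for (2), the divergence-free hypothesis is used to trade tangential regularity for normal regularity, after which a Lions-type trace theorem yields the conclusion.

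For (1), I first extend $f$ to $F \in \dot B^{\al}_{pq}(\R)$ (resp.\ to $F \in \dot H^\al_p(\R)$) with comparable norm, using the definition of the restriction norm. Decomposing $F = \sum_{j \in \mathbb Z} \Delta_j F$ via homogeneous Littlewood--Paley projections, each $\Delta_j F$ is band-limited at frequency $\sim 2^j$, and a Bernstein--Nikolskii inequality in the $x_n$ variable therefore gives
\begin{align*}
\|\Delta_j F(\cdot, 0)\|_{L^p(\Rn)} \leq c\, 2^{j/p}\, \|\Delta_j F\|_{L^p(\R)}.
\end{align*}
Summing in $\ell^q$ with weight $2^{j(\al - 1/p)q}$ produces the Besov trace estimate; the Sobolev version follows similarly using the square-function characterization $\|F\|_{\dot H^\al_p} \sim \|(\sum_j 2^{2j\al}|\Delta_j F|^2)^{1/2}\|_{L^p}$ combined with Minkowski's inequality. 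A structurally cleaner alternative is to exhibit a bounded coretraction $E \colon \dot B^{\al-1/p}_{pq}(\Rn) \to \dot B^{\al}_{pq}(\R_+)$, for instance the Poisson extension $Eg(x', x_n) = (e^{-x_n \sqrt{-\Delta'}} g)(x')$, and then invoke the retraction-coretraction theorem (Theorem~6.4.2 of \cite{BL}).

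For (2), the divergence-free condition gives the distributional identity $\pa_{x_n} f_n = -\sum_{j=1}^{n-1} \pa_{x_j} f_j$ in $\R_+$. Viewing $f_n$ as a function of $x_n \in (0, \infty)$ with values in tangential function spaces, the right-hand side places $\pa_{x_n} f_n$ in $L^p((0, \infty); \dot H^{-1}_p(\Rn))$, while of course $f_n \in L^p((0, \infty); L^p(\Rn))$, both with norms controlled by $\|f\|_{L^p(\R_+)}$. A Lions-type trace theorem then locates the trace at $x_n = 0$ in the real interpolation space
\begin{align*}
(L^p(\Rn),\, \dot H^{-1}_p(\Rn))_{1/p,\, p} = \dot B^{-1/p}_{pp}(\Rn),
\end{align*}
yielding the desired bound $\|f_n|_{x_n=0}\|_{\dot B^{-1/p}_{pp}(\Rn)} \leq c \|f\|_{L^p(\R_+)}$. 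A scaling check confirms that the interpolation parameter must be $\theta = 1/p$, not $1/p'$.

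The main technical subtlety is the homogeneous nature of the spaces. Since $\dot H^s_p$ and $\dot B^s_{pq}$ are quotient spaces modulo polynomials, I must ensure that the Littlewood--Paley sums defining boundary values converge in an appropriate distributional class and that the trace of each band-limited piece at $x_n = 0$ is well-defined. This is routinely handled by first truncating low frequencies, proving the estimates for the smooth truncations, and passing to the limit; for part (2) one additionally uses density of smooth, compactly supported divergence-free fields in the natural $L^p$ class of solenoidal vector fields on $\R_+$.
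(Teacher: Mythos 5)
You should note first that the paper itself does not prove this lemma at all: it simply cites Theorem 6.6.1 of \cite{BL} for (1) and Theorem 3.2.2 of \cite{galdi} for (2). Measured against those sources, your sketch of the Besov half of (1) (Littlewood--Paley plus the Bernstein bound $\|\Delta_j F(\cdot,0)\|_{L^p(\Rn)}\le c2^{j/p}\|\Delta_jF\|_{L^p(\R)}$) is essentially the standard proof, modulo the routine regrouping step you suppress: $\Delta_jF(\cdot,0)$ is ball-supported in $\xi'$, not annulus-supported, so one must write $\Delta'_k(F(\cdot,0))=\sum_{j\ge k-c}\Delta'_k(\Delta_jF)(\cdot,0)$ and use a discrete Hardy/Young inequality, which is exactly where the hypothesis $\al>\frac1p$ enters -- you never say where it is used. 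Your part (2) is a genuinely different and legitimate route from Galdi's: he defines the normal trace by duality through the generalized Gauss formula, whereas you use ${\rm div}\,f=0$ to place $\pa_{x_n}f_n$ in $L^p(0,\infty;\dot H^{-1}_p(\Rn))$ and invoke the Lions--Peetre trace method, with the correct interpolation identity $(L^p(\Rn),\dot H^{-1}_p(\Rn))_{1/p,p}=\dot B^{-1/p}_{pp}(\Rn)$; the index and the scaling check are right. One side remark there is wrong as written: smooth solenoidal fields compactly supported in the open half-space have zero normal trace, and their $L^p$-closure is precisely the zero-normal-trace class, so "density" in the full solenoidal $L^p$ class cannot be used to transfer the estimate; you do not actually need it, since the Lions argument applies directly to the absolutely continuous representative of $f_n$.

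The genuine gap is in the $\dot H^\al_p$ estimate of (1), i.e.\ $\|f|_{x_n=0}\|_{\dot B^{\al-\frac1p}_{pp}(\Rn)}\le c\|f\|_{\dot H^\al_p(\R_+)}$, which you dismiss with "square-function characterization combined with Minkowski's inequality." For $p\ge 2$ this works because $\dot H^\al_p\hookrightarrow\dot B^\al_{pp}$ and the Besov argument applies; but for $1<p<2$ one only has $\dot H^\al_p=\dot F^\al_{p,2}\hookrightarrow\dot B^\al_{p,2}$, and Minkowski goes the wrong way: the scheme above then yields the trace only in the strictly larger space $\dot B^{\al-\frac1p}_{p,2}$, not in $\dot B^{\al-\frac1p}_{pp}$. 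The fact that the trace of a Triebel--Lizorkin (in particular Bessel-potential) space lands in $\dot B^{\al-\frac1p}_{pp}$ independently of the microscopic index requires a finer argument -- e.g.\ Peetre maximal functions exploiting that only values with $x_n\lesssim 2^{-j}$ see the trace of the $j$-th block, or the Frazier--Jawerth atomic decomposition -- or one simply cites the potential-space case of Theorem 6.6.1 in \cite{BL}, as the paper does. Note also that your "structurally cleaner alternative" via a Poisson coretraction and the retract theorem only provides a bounded extension (right inverse); it does not by itself prove boundedness of the trace operator, so it cannot replace the direct estimate, and in particular does not repair the $1<p<2$ case.
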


%
%
%
%

\subsection{Newtonial potential}

The fundamental solution of the Laplace equation in $\R$ is  denoted by
\[
   N(x) = \left\{\begin{array}{ll}
 \vspace{2mm}
  \frac{1}{\om_n (2-n)|x|^{n-2}}&\mbox{if }n\geq 3,\\
 \frac{1}{2\pi}\ln |x|&\mbox{if }n=2,\end{array}\right.
 \]
$\omega_n$ is the surface area of the unit sphere in $\R$.

We define $Nf$ by
\[N f(x)
=
\int_{\Rn} N(x'-y',x_n)f(y')dy'.\]

Observe that $D_{x_n}Nf$ is Poisson operator of Laplace equation in $\R_+$ and $D_{x_i}Nf=D_{x_n}NR_i'f$  for $i\neq n$, where   $R'=(R_1,\cdots, R_{n-1})$ is the $n-1$ dimensional Riesz operator.
Poisson operator is bounded from $\dot{B}^{\al-\frac{1}{p}}_{pp}(\Rn)$ to $\dot H^\al_{p}(\R_+), \al \geq 0$ and $R'$ is bounded from $\dot B^s_{pq}(\Rn)$ to $\dot B^s_{pq}(\Rn)$, $s\in {\mathbb R}$ (See \cite{St}).
Hence  the following estimates hold.
\begin{lemm}
\label{poisson1}
Let $\al\geq 0$, $1<p<\infty$ and $1 \leq q \leq \infty$. Then
\begin{align}\label{Poisson}
 \| \nabla_x Nf\|_{\dot H^\al_{p}(\R_+)}\leq c\|f\|_{\dot B^{\al-\frac{1}{p}}_{p}(\Rn)}, \qquad   \| \nabla_x Nf\|_{\dot B^\al_{pq}(\R_+)}\leq c\|f\|_{\dot B^{\al-\frac{1}{p}}_{pq}(\Rn)}.
 \end{align}
\end{lemm}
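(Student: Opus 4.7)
The argument is essentially an assembly of the two facts recalled immediately above the statement: the Poisson operator $\mathcal{P}:=D_{x_n}N$ maps $\dot B^{\al-\frac{1}{p}}_{pp}(\Rn)$ boundedly into $\dot H^\al_p(\R_+)$ (respectively $\dot B^{\al-\frac{1}{p}}_{pq}(\Rn)$ into $\dot B^\al_{pq}(\R_+)$) for $\al\geq 0$, and the tangential Riesz transform $R'=(R_1,\ldots,R_{n-1})$ is bounded on $\dot B^{s}_{pq}(\Rn)$ for every $s\in{\mathbb R}$. Together with the pointwise identity $D_{x_i}Nf = D_{x_n}N(R_i'f)$ for $i=1,\ldots,n-1$, these two facts will close the argument.

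First I would dispose of the normal component $D_{x_n}Nf$: the two inequalities in \eqref{Poisson} are nothing but the recalled Poisson mapping properties, applied directly. For the tangential derivatives ($i<n$), I would verify the identity $D_{x_i}Nf=D_{x_n}N(R_i'f)$ by passing to the partial Fourier transform in $x'$. The Newtonian convolution gives
\[
\widehat{Nf}(\xi',x_n) = -\frac{1}{2|\xi'|}\,e^{-|\xi'|x_n}\,\hat{f}(\xi'),\qquad x_n>0,
\]
so that $\widehat{D_{x_n}Nf}=\tfrac12 e^{-|\xi'|x_n}\hat{f}$ is indeed the Poisson semigroup symbol, while $\widehat{D_{x_i}Nf}=-\tfrac{i\xi_i}{2|\xi'|}e^{-|\xi'|x_n}\hat{f}=\tfrac12 e^{-|\xi'|x_n}\widehat{R_i'f}$, which is the claimed identity.

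With the identity in hand, applying the normal case to $R_i'f$ in place of $f$ and then invoking the $\dot B^{\al-\frac{1}{p}}_{pp}$-boundedness of the Riesz transform yields
\[
\|D_{x_i}Nf\|_{\dot H^\al_p(\R_+)}\leq c\|R_i'f\|_{\dot B^{\al-\frac{1}{p}}_{pp}(\Rn)}\leq c\|f\|_{\dot B^{\al-\frac{1}{p}}_{pp}(\Rn)},
\]
and the analogous two-step chain in the $\dot B^\al_{pq}$ norm. Summing over $i=1,\ldots,n$ produces both inequalities of \eqref{Poisson}. I do not anticipate a real obstacle; the only point requiring care is ensuring the Poisson mapping statement is available in the full range $\al\geq 0$, $1<p<\infty$, $1\leq q\leq\infty$, which follows from the classical $L^p$-theory of the Dirichlet problem in $\R_+$ together with the interpolation identities \eqref{interpolation1}--\eqref{interpolation1-2} already listed, and, for the endpoint $q=\infty$, a standard real-interpolation/K-functional characterization of $\dot B^{\al-\frac{1}{p}}_{p\infty}$. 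The lemma is thus essentially bookkeeping on top of the two cited mapping properties.
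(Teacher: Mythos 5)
Your argument is exactly the paper's: the lemma is deduced from the identity $D_{x_i}Nf=D_{x_n}N(R_i'f)$ together with the boundedness of the Poisson operator from $\dot B^{\al-\frac1p}_{pq}(\Rn)$ into the half-space spaces and of the Riesz transforms on Besov spaces, which is precisely how the paper justifies \eqref{Poisson}. Your Fourier-side verification of the identity and the interpolation remark for the $\dot B^{\al}_{pq}$ case only fill in details the paper leaves implicit, so the proposal is correct and follows essentially the same route.
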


%

According to Calderon-Zygmund inequality
\begin{align*}
\|  \int_{{\mathbb R}^n} \nabla_x^2N(\cdot-y)    f(y) dy \|_{ L^p(\R_+)} \leq c \| f\|_{ L^p(\R_+)}\quad \mbox{ for } \quad 1 < p < \infty.
\end{align*}
Using Lemma \ref{poisson1}, (1) of  Lemma \ref{trace} and Calderon-Zygmund inequality the following estimates also hold.
\begin{lemm}\label{lemma0929-22}
For $\al \geq  0$ and $1 < p < \infty$.
\begin{align*}
\| \nabla_x^2 \int_{{\mathbb R}^n_+} N(\cdot-y)    f(y) dy \|_{ \dot H^\al_{p} (\R_+)} \leq c \| f\|_{ \dot H^\al_{p}  (\R_+)}.
\end{align*}
\end{lemm}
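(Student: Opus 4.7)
The strategy is to prove the estimate at $\al = 0$ by Calder\'on--Zygmund, then at positive integer $\al$ by induction using integration by parts, and finally at general $\al \ge 0$ by complex interpolation. For the base case $\al = 0$, let $\tilde f$ denote the zero extension of $f \in L^p(\R_+)$ to $\R$. Then on $\R_+$ one has $\int_{\R_+} N(\cdot - y) f(y) dy = \int_{\R} N(\cdot - y) \tilde f(y) dy$, so the full-space Calder\'on--Zygmund inequality recalled immediately before the lemma yields the desired bound, since $\|\tilde f\|_{L^p(\R)} = \|f\|_{L^p(\R_+)}$.

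For the inductive step, set $Tf(x) = \nabla_x^2 \int_{\R_+} N(x-y) f(y) dy$ and assume the estimate holds at integer level $k$. Using $D_{x_i} N(x-y) = -D_{y_i} N(x-y)$ together with integration by parts in $y_i$ (which produces no boundary contribution for $i < n$, since $\R_+$ is unbounded in those directions) gives $D_{x_i} Tf = T(D_{y_i} f)$; by contrast, the normal derivative picks up a genuine boundary term
\begin{align*}
D_{x_n} Tf(x) = T(D_{y_n} f)(x) + \nabla_x^2 \int_{\Rn} N(x' - y', x_n) f(y', 0) dy',
\end{align*}
where the second summand is exactly $\nabla_x^2 N(f|_{y_n=0})$ in the notation of Section \ref{preliminary}. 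Applying $\nabla_x^k$ and the induction hypothesis to the volume piece gives $\|\nabla_x^k T(\nabla f)\|_{L^p(\R_+)} \le c \|f\|_{\dot H^{k+1}_p(\R_+)}$. For the boundary piece, Lemma \ref{poisson1} (with its parameter set to $k+1$) and the trace theorem Lemma \ref{trace}(1) combine to give
\begin{align*}
\|\nabla_x^{k+2} N(f|_{y_n=0})\|_{L^p(\R_+)} &\le \|\nabla_x N(f|_{y_n=0})\|_{\dot H^{k+1}_p(\R_+)} \\
&\le c \|f|_{y_n=0}\|_{\dot B^{k+1-\frac1p}_{pp}(\Rn)} \le c \|f\|_{\dot H^{k+1}_p(\R_+)},
\end{align*}
which closes the induction.

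For non-integer $\al \ge 0$, pick integers $k_1 < \al < k_2$ and apply the complex interpolation identity \eqref{interpolation1} to the endpoint operator bounds $T \colon \dot H^{k_i}_p(\R_+) \to \dot H^{k_i}_p(\R_+)$ for $i = 1, 2$. The main obstacle throughout is the boundary term produced by the normal integration by parts: controlling it requires the exact numerical match between the $1/p$ regularity loss in the trace theorem (Lemma \ref{trace}(1)) and the corresponding regularity gain in the Poisson estimate (Lemma \ref{poisson1}), and this cancellation is precisely what makes the induction close at every integer level.
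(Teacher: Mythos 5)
Your proof is correct and follows essentially the same route the paper sketches: Calder\'on--Zygmund for the $L^p$ base case, the Poisson estimate of Lemma \ref{poisson1} combined with the trace theorem, Lemma \ref{trace}(1), to absorb the boundary term created by the normal integration by parts, and interpolation via \eqref{interpolation1} to pass to non-integer $\al$. The paper states these ingredients without detail, and your induction on integer regularity is a faithful and complete elaboration of that argument.
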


\subsection{Gaussian kernel}

The fundamental solution of the heat equation in $\R$ is denoted by
\[
 \Gamma(x,t)=\left\{\begin{array}{ll} \vspace{2mm}
 \frac{1}{ (2\pi t)^{\frac{n}{2}}}e^{-\frac{|x|^2}{4t}}&\mbox{if }t>0,\\
 0& \mbox{if }t\leq 0.
 \end{array}\right.
 \]


Define   $\Gamma_t*f(x)=\int_{\R}\Gamma(x-y,t)f(y) dy$.
The  norm of the homogeneous Besov space $\dot{B}^s_{pq}(\R)$  has the following equivalence:
\begin{align*}
 \|f\|_{\dot B^s_{pq}(\R)}\equiv \Big(\int^\infty_0\big\| t^{k-\frac{s}{2}}D^k_t\Gamma_t*f\big\|_{L^p(\R)}^q\frac{dt}{t}   \Big)^{\frac{1}{q}}
 \end{align*} for nonnegative integer $k>\frac{s}{2}$  (See \cite{Tr} for the reference).
By interpolation theorem, we have the following estimates.
\begin{lemm}
\label{gauss.equiv}

\begin{align*}
\|\Ga_t *f\|_{L^q(0,\infty;\dot{H}^\al_p(\R))}\leq c \|f\|_{\dot B^{\al-\frac{2}{q}}_{pq}(\R)}, \qquad \|\Ga_t *f\|_{L^q(0,\infty;\dot{B}^\al_{pq}(\R))}\leq c \|f\|_{\dot B^{\al-\frac{2}{q}}_{pq}(\R)}.
 \end{align*}
\end{lemm}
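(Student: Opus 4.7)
The plan is to recognize both inequalities as immediate consequences of the heat semigroup characterization of homogeneous Besov spaces quoted just above the lemma, combined with the lifting/commutation properties of $(-\Delta)^{\alpha/2}$ and the real interpolation identities \eqref{interpolation1} and \eqref{realinterpolation2}.

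First I would handle the Sobolev estimate. Using the displayed characterization with the minimal admissible choice $k=0$ and $s = -2/q$ (which is negative, so $k=0 > s/2$), one has
\begin{align*}
\|g\|_{\dot B^{-2/q}_{pq}(\R)}^{q} \;\equiv\; \int_0^\infty \bigl\| t^{1/q}\Gamma_t * g\bigr\|_{L^p(\R)}^{q} \frac{dt}{t} \;=\; \|\Gamma_t * g\|_{L^q(0,\infty;L^p(\R))}^{q}.
\end{align*}
This is already the case $\al = 0$. For general $\al$, apply this to $g=(-\De)^{\al/2} f$ and use that $(-\De)^{\al/2}$ commutes with the heat semigroup and is a lifting of order $\al$ on homogeneous Besov spaces, i.e.\ $\|(-\De)^{\al/2} f\|_{\dot B^{-2/q}_{pq}} \approx \|f\|_{\dot B^{\al - 2/q}_{pq}}$ and $\|\Gamma_t * g\|_{L^p} = \|(-\De)^{\al/2}\Gamma_t * f\|_{L^p} \approx \|\Gamma_t * f\|_{\dot H^\al_p}$. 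Combining these identities yields the first inequality.

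For the Besov estimate I would invoke real interpolation, as the hint in the paper suggests. Pick two exponents $\al_1 < \al < \al_2$ with $\al = \te \al_1 + (1-\te)\al_2$ for some $\te \in (0,1)$. By the Sobolev case just proved, the linear map $f \mapsto \Gamma_t * f$ is bounded
\begin{align*}
\dot B^{\al_i - 2/q}_{pq}(\R) \;\longrightarrow\; L^q\bigl(0,\infty; \dot H^{\al_i}_p(\R)\bigr), \qquad i=1,2.
\end{align*}
Applying the real interpolation functor $(\cdot,\cdot)_{\te,q}$ to both sides and using \eqref{interpolation1} on the target together with \eqref{realinterpolation2} to pull the interpolation inside the Bochner space gives
\begin{align*}
\Gamma_t * \cdot \;:\; \bigl(\dot B^{\al_1-2/q}_{pq},\,\dot B^{\al_2-2/q}_{pq}\bigr)_{\te,q} \;\longrightarrow\; L^q\bigl(0,\infty;\,(\dot H^{\al_1}_p,\dot H^{\al_2}_p)_{\te,q}\bigr) = L^q(0,\infty;\dot B^\al_{pq}(\R)),
\end{align*}
and the left side equals $\dot B^{\al-2/q}_{pq}(\R)$ by the standard real interpolation identity for Besov spaces with the same fine index. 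This is precisely the second inequality.

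The only real obstacle is the bookkeeping at the endpoints. One must verify that $(-\De)^{\al/2}$ is indeed a bijective lifting between the homogeneous spaces at hand (standard for the Bessel/Riesz potential definitions of $\dot H^\al_p$ used here), and that $k=0$ is admissible in the heat characterization whenever $s<0$. Everything else is a transcription of the $k=0$ Littlewood--Paley identity plus one round of real interpolation.
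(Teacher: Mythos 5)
Your argument is correct and follows the same route the paper intends: the heat-semigroup characterization of $\dot B^s_{pq}(\R)$ with $k=0$, $s=-\frac{2}{q}<0$ gives the case $\al=0$ exactly, and the general case follows by the lifting property of $(-\De)^{\al/2}$ together with the real interpolation identities \eqref{interpolation1} and \eqref{realinterpolation2}. The paper states this lemma with only the remark ``by interpolation theorem,'' so your write-up simply supplies the details (lifting plus one round of real interpolation) that the authors leave implicit, and those details check out.
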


\subsection{ H$\ddot{\rm o}$lder type inequality}

The following H$\ddot{\rm o}$lder type inequality  is well-known result (see Lemma 2.2 in \cite{chae}). For $\be >0$, $\frac1{r_i} + \frac1{s_i} = \frac1p$, $i=1,2$,
\begin{align}\label{bilinear1}
\| f_1f_2\|_{\dot B^{\be}_{pq} (\R  ) }  \leq
        c \big( \| f_1\|_{ \dot  B^{\be }_{s_1 q} (\R  ) } \| f_2\|_{  L^{r_1} (\R  )}   + \| f_1\|_{ L^{s_2 }(\R  )} \|f_2\|_{ \dot  B^{\be }_{r_2 q}(\R   )}  \big).
\end{align}

Let $g$ be a function defined in $\R_+ $.
Let  $\tilde g$  be the Adam's extension of $g$ over $ \R  $ (see Theorem 5.19 in \cite{AF}). Then, for $1 \leq p, q \leq \infty$ and $0 < \be$,  we have
\begin{align}
\label{extension2}
\| \tilde g\|_{ \dot  B^{\be }_{p q} (\R  ) }  \leq c\| g\|_{ \dot  B^{\be }_{p q} (\R_+  ) }, \quad  \| \tilde g\|_{  L^p (\R  )}  \leq  c\| g\|_{  L^p (\R_+  )}.
\end{align}
Using  \eqref{bilinear1} and  \eqref{extension2}, the following estimates hold.
\begin{lemm}\label{0510prop}
Let $0 < \be  $ and $1 \leq p, q \leq \infty$. Then, for   $\frac1{r_i} + \frac1{s_i} = \frac1p$, $i=1,2$,
\begin{align*}
\| f_1f_2\|_{\dot B^{\be}_{pq} (\R_+  ) }  \leq
        c \big( \| f_1\|_{ \dot  B^{\be }_{s_1 q} (\R_+  ) } \| f_2\|_{  L^{r_1} (\R_+  )}   + \| f_1\|_{ L^{s_2 }(\R_+  )} \|f_2\|_{ \dot  B^{\be }_{r_2 q}(\R_+   )}  \big).
\end{align*}
\end{lemm}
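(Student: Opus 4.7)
The plan is to reduce the claim to the whole-space inequality \eqref{bilinear1} by means of a simultaneous Adams extension of $f_1$ and $f_2$, and then pull the estimate back to $\mathbb{R}^n_+$ via \eqref{extension2}. Concretely, I first invoke Theorem 5.19 of \cite{AF} to produce extensions $\widetilde{f_1}, \widetilde{f_2}\in \dot B^{\beta}_{\ast\cdot}({\mathbb R}^n)\cap L^{\ast}({\mathbb R}^n)$ of $f_1,f_2$ respectively, with $\widetilde{f_i}|_{\mathbb R^n_+}=f_i$. Since the Adams extension is a bounded linear operator on each of the function spaces that appear on the right-hand side of the target inequality, estimate \eqref{extension2} gives
\begin{align*}
\|\widetilde{f_1}\|_{\dot B^{\beta}_{s_1 q}(\mathbb R^n)} &\leq c\|f_1\|_{\dot B^{\beta}_{s_1 q}(\mathbb R^n_+)},\qquad \|\widetilde{f_2}\|_{L^{r_1}(\mathbb R^n)} \leq c\|f_2\|_{L^{r_1}(\mathbb R^n_+)},\\
\|\widetilde{f_1}\|_{L^{s_2}(\mathbb R^n)} &\leq c\|f_1\|_{L^{s_2}(\mathbb R^n_+)}, \qquad \|\widetilde{f_2}\|_{\dot B^{\beta}_{r_2 q}(\mathbb R^n)} \leq c\|f_2\|_{\dot B^{\beta}_{r_2 q}(\mathbb R^n_+)}.
\end{align*}

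Next, the product $\widetilde{f_1}\widetilde{f_2}$ is a measurable function on $\mathbb R^n$ whose restriction to $\mathbb R^n_+$ equals $f_1 f_2$, so by the infimum characterization of the restriction Besov norm on $\mathbb R^n_+$ stated in Section \ref{notation},
\begin{align*}
\|f_1 f_2\|_{\dot B^{\beta}_{pq}(\mathbb R^n_+)} \leq \|\widetilde{f_1}\widetilde{f_2}\|_{\dot B^{\beta}_{pq}(\mathbb R^n)}.
\end{align*}
Now I apply the whole-space H\"older-type inequality \eqref{bilinear1} to $\widetilde{f_1},\widetilde{f_2}\in\mathbb R^n$ with the same exponents $(\beta,p,q,r_i,s_i)$, obtaining
\begin{align*}
\|\widetilde{f_1}\widetilde{f_2}\|_{\dot B^{\beta}_{pq}(\mathbb R^n)} \leq c\bigl(\|\widetilde{f_1}\|_{\dot B^{\beta}_{s_1 q}(\mathbb R^n)}\|\widetilde{f_2}\|_{L^{r_1}(\mathbb R^n)} + \|\widetilde{f_1}\|_{L^{s_2}(\mathbb R^n)}\|\widetilde{f_2}\|_{\dot B^{\beta}_{r_2 q}(\mathbb R^n)}\bigr),
\end{align*}
and substituting the four extension bounds displayed above finishes the proof.

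I do not anticipate any genuine obstacle: the proof is a routine transplantation via an extension operator, and the assumption $\beta>0$ is precisely what guarantees that Adams' construction gives a bounded extension $\dot B^{\beta}_{pq}(\mathbb R^n_+)\to\dot B^{\beta}_{pq}(\mathbb R^n)$, with a single choice of extension serving simultaneously for the Besov and Lebesgue scales that appear. The only point requiring a brief comment is that one extension $\widetilde{f_i}$ must control \emph{both} the Besov norm and the $L^p$ norm of $f_i$ on the right-hand side; this is built into Adams' construction, since it is a universal linear extension that is simultaneously bounded on the full Sobolev/Besov/Lebesgue ladder.
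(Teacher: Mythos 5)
Your argument is exactly the paper's: extend $f_1,f_2$ by the Adams extension, apply the whole-space inequality \eqref{bilinear1}, use \eqref{extension2} for the right-hand side, and bound the half-space norm of $f_1f_2$ by the whole-space norm of $\widetilde{f_1}\widetilde{f_2}$ via the restriction (infimum) definition. The paper simply states this reduction without writing out the details, so your write-up is a correct, slightly more explicit version of the same proof.
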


%
%

%
\subsection{Helmholtz projection} \label{projection}

The Helmholtz projection ${\mathbb P}$ in the half-space $\R_+$ is given  by
\begin{align}\label{Hprojection}
{\mathbb P} f = f -\na {\mathbb Q}f = f - \na {\mathbb Q}_1f - \na {\mathbb Q}_2 f,
\end{align}
where ${\mathbb Q}_1 f$ and ${\mathbb Q}_2f $ satisfy the following equations;
\begin{align*}
\De {\mathbb Q}_1 f ={\rm div}\, f,\qquad
{\mathbb Q}_1 f|_{x_n =0} =0
\end{align*}
and
\begin{align*}
\De{\mathbb Q}_2f =0,\qquad
D_{x_n} {\mathbb Q}_2f|_{x_n =0} = \big(f_n -D_{x_n}{\mathbb Q}_1f\big)|_{x_n =0}.
\end{align*}
Note that ${\mathbb Q}_1f$ and ${\mathbb Q}_2f$ are represented by
\begin{align}\label{0427-1}
{\mathbb Q}_1f(x)& =- \int_{\R_+}  D_{y_i} \big(N(x- y) - N (x - y^* ) \big) f_i(y) dy,\\
\label{0427-2} {\mathbb Q}_2f (x) & = \int_{\Rn}   N(x'-y',x_n)  \big(f_n(y', 0)  - D_{y_n} {\mathbb Q}_1f (y', 0) \big) dy'.
\end{align}
Note that $\mbox{div}{\mathbb P}f=0$ and $({\mathbb P}f)_n|_{x_n =0} =0$.

\begin{lemm}\label{hemoz}
Let  $f=\mbox{div}{\mathcal F}$ with ${\mathcal F}|_{x_n =0} =0$. For $\al \geq 0$,
\begin{align*}
\| {\mathbb Q} f\|_{\dot H^\al_{p} (\R_+)} \leq c \|{\mathcal F}\|_{\dot H^\al_p (\R_+)}, \qquad \| {\mathbb Q} f\|_{\dot B^\al_{pq} (\R_+)} \leq c \|{\mathcal F}\|_{\dot B^\al_{pq} (\R_+)}.
\end{align*}

\end{lemm}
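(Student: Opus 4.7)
The plan is to estimate $\mathbb{Q}_1 f$ and $\mathbb{Q}_2 f$ separately using the decomposition \eqref{Hprojection}. For $\mathbb{Q}_1 f$, write $f_i = \pa_{y_j}\mathcal F_{ij}$ in formula \eqref{0427-1} and integrate by parts in $y_j$. The boundary contributions at $y_n=0$ (which arise only when $j=n$) vanish because $\mathcal F|_{y_n=0}=0$, producing
\[
\mathbb Q_1 f(x) \;=\; \int_{\R_+} D_{y_i}D_{y_j}\bigl(N(x-y)-N(x-y^*)\bigr)\,\mathcal F_{ij}(y)\,dy.
\]
Commuting $D_{y_i}$ into $D_{x_i}$ (with a sign change on the reflected kernel whenever $y_n$ is differentiated), the non-reflected term is exactly $D^2_x \int_{\R_+} N(x-y)\mathcal F_{ij}(y)\,dy$, whose $\dot H^\al_p(\R_+)$ norm is dominated by $\|\mathcal F\|_{\dot H^\al_p(\R_+)}$ via Lemma \ref{lemma0929-22}. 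For the reflected term, the change of variable $z = y^*$ transports the integral to $\mathbb R^n_-$, and a suitable reflection/extension of $\mathcal F$ across $\{x_n=0\}$ (allowed because the trace $\mathcal F|_{x_n=0}=0$ prevents a jump singularity) turns the operator into a standard Calder\'on--Zygmund convolution on all of $\mathbb R^n$; restricting back and using \eqref{extension2} yields the same estimate. The $\dot B^\al_{pq}$ analogue then follows by real interpolation via \eqref{interpolation1}.

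For $\mathbb{Q}_2 f$, the key observation is that $h := f - \nabla \mathbb{Q}_1 f$ is divergence-free in $\R_+$, since $\Delta\mathbb Q_1 f = \operatorname{div} f$. Thus formula \eqref{0427-2} identifies $\mathbb Q_2 f = Ng$ with $g = h_n|_{y_n=0}$. Combining the Poisson bound of Lemma \ref{poisson1} on $\nabla Ng$ with the equivalence $\|Nf\|_{\dot B^\al_{pq}(\R_+)} \sim \|\nabla Nf\|_{\dot B^{\al-1}_{pq}(\R_+)}$ gives
\[
\|\mathbb{Q}_2 f\|_{\dot H^\al_p(\R_+)} + \|\mathbb{Q}_2 f\|_{\dot B^\al_{pq}(\R_+)} \;\le\; c\|g\|_{\dot B^{\al-1-\frac{1}{p}}_{pp}(\Rn)} + c\|g\|_{\dot B^{\al-1-\frac{1}{p}}_{pq}(\Rn)}.
\]
The trace theorem for divergence-free fields (Lemma \ref{trace} (2), extended to the needed range using the tangential-divergence representation $h_n(x',x_n) = -\int_{x_n}^\infty \nabla'\!\cdot h'(x',s)\,ds$) together with the bound on $\mathbb Q_1 f$ from the first step and the identity $h = \operatorname{div}\mathcal F - \nabla\mathbb Q_1 f$ gives $\|g\|_{\dot B^{\al-1-\frac{1}{p}}_{pq}(\Rn)} \le c\|\mathcal F\|_{\dot B^\al_{pq}(\R_+)}$ (and similarly with $q = p$). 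Summing the $\mathbb Q_1$ and $\mathbb Q_2$ estimates completes the proof.

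The main obstacle will be carrying out the reflection/extension for $\mathbb Q_1 f$ and the divergence-free trace for $\mathbb Q_2 f$ uniformly across $\al\ge 0$: above the trace threshold $\al = 1/p$ a naive zero-extension of $\mathcal F$ no longer preserves $\dot H^\al_p$ regularity, and for $\al$ near $0$ the negative-index trace $g \in \dot B^{\al-1-\frac{1}{p}}_{pq}(\Rn)$ lies outside the range of the classical trace theorem. Both obstructions are circumvented by systematically exploiting the cancellations $\mathcal F|_{x_n=0}=0$ and $\operatorname{div} h = 0$, together with real interpolation between integer-order endpoints via \eqref{interpolation1}.
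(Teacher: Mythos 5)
Your treatment of $\mathbb{Q}_1 f$ is essentially the paper's: integrate by parts using ${\mathcal F}|_{x_n=0}=0$, bound the non-reflected second-derivative Newtonian kernel via Lemma \ref{lemma0929-22}, handle the reflected kernel by reflection, and interpolate between integer orders. One caveat even here: for $\al=2$ only the trace of $\mathcal F$ itself vanishes, not that of $D_{x_n}\mathcal F$, so the reflected/zero-extended field is generally not in $\dot W^2_p(\R)$ and the reflected term is not literally a Calder\'on--Zygmund convolution on all of $\R$; the boundary terms produced when transferring normal derivatives are Poisson-type and controllable, but writing them out is in effect what the paper's decomposition ${\mathbb Q}_1f=\sum_{k\ne n}D_{x_k}{\mathbb Q}_1F_k+D_{x_n}A$ in Lemma \ref{lemma0523-3} accomplishes.

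The genuine gap is in $\mathbb{Q}_2 f$. You write $\mathbb{Q}_2 f = Ng$ with $g=(f-\na\mathbb{Q}_1 f)_n|_{x_n=0}$ and want $\|\mathbb{Q}_2 f\|_{\dot H^\al_p(\R_+)}\le c\|g\|_{\dot B^{\al-1-\frac1p}_{pp}(\Rn)}$ followed by a trace bound $\|g\|_{\dot B^{\al-1-\frac1p}_{pq}(\Rn)}\le c\|{\mathcal F}\|_{\dot B^{\al}_{pq}(\R_+)}$. For $\al\ge 1$ this can be run (at $\al=1$ the field $h=f-\na\mathbb{Q}_1 f$ is in $L^p$, divergence free, and Lemma \ref{trace}(2) applies), but for $0\le\al<1$ --- in particular at the endpoint $\al=0$, which is exactly the case the paper uses, e.g. in \eqref{0910-1} and in the $\al=\be=0$ application of Section \ref{uniform1} --- $h$ is only a distribution of order $-1$ built from ${\mathcal F}\in L^p$: it is not in $L^p$, so Lemma \ref{trace}(2) gives nothing, and a trace at regularity $\al-1-\frac1p<-\frac1p$ does not exist for general divergence-free distributions; likewise Lemma \ref{poisson1} is stated only for nonnegative order, so your bound on $Ng$ (as opposed to $\na Ng$) at the negative order $\al-1$ is also unproven. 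The ``tangential-divergence representation'' $h_n(x',x_n)=-\int_{x_n}^\infty\na'\cdot h'(x',s)\,ds$ that you invoke is indeed the cancellation that must be exploited, but as stated it is only a gesture: $\na'\cdot h'$ is not integrable at this regularity, and making it rigorous amounts to commuting the tangential derivatives outside the Poisson operator so that the traces are taken of the zero-order quantities $F_k-\na\mathbb{Q}_1F_k$ and $A$, i.e. to proving the identity ${\mathbb Q}_2 f=\sum_{k\ne n}D_{x_k}{\mathbb Q}_2F_k-\sum_{k\ne n}D^2_{x_k}B$ of the paper's Lemma \ref{lemma0523-3}, after which Lemma \ref{trace}(2) applies at the $L^p$ level. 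Without that step your argument does not cover $0\le\al<1$, and interpolation cannot recover it because the missing case is an endpoint.
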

\begin{proof}
The proof of Lemma \ref{hemoz} is given in Appendix \ref{appendix.hemoz}.

\end{proof}

\begin{lemm}\label{0929-1}
Let $1 < p_1 \leq p < \infty, \, 1 < q_1 \leq q < \infty$. Let (1) $0 < \be \leq  \al $ if $0 < \al \leq 1$  and (2) $1 < \be \leq  \al$ if $ 1 < \al \leq 2$ such that $ 0=\al -\be -1 +n (\frac{1}{p_1}-\frac{1}{p})+\frac2{q_1}-\frac2{q}$.  Let $f=\mbox{div}\mathcal F$,
where  ${\mathcal F} \in L^{q_1} (0, \infty, \dot B^{\be}_{p_1q}(\R_+)) $ with ${\mathcal F} |_{x_n =0} =0$.  Then,
\begin{align*}
\|  \Ga* {\mathbb P}f\|_{L^q (0, \infty;  \dot B^\al_{pq}(\R_+))}, \,\, \|  \Ga^* * {\mathbb P}f\|_{L^q(0, \infty;  \dot B^\al_{pq}(\R_+))} \leq c\| {\mathcal F}\|_{L^{q_1} (0,\infty;  \dot B^{\be }_{p_1q}(\R_+))}.
\end{align*}
Here $\Gamma *f:=\int^t_0\int_{\R_+}\Gamma(x-y,t-s)f(y,s)dyds$ and $\Gamma^* *f:=\int^t_0\int_{\R_+}\Gamma(x'-y',x_n+y_n,t-s)f(y,s)dyds.$

\end{lemm}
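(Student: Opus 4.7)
The strategy is to reduce the estimate to a full-space heat-semigroup smoothing bound followed by a Hardy--Littlewood--Sobolev integration in time, with Lemma \ref{hemoz} absorbing the Helmholtz correction. Decomposing $\mathbb{P}f = \mbox{div}\,\mathcal{F} - \nabla \mathbb{Q} f$, Lemma \ref{hemoz} gives pointwise in $s$ that
\[
\|\mathbb{Q} f(\cdot,s)\|_{\dot B^{\beta}_{p_1 q}(\mathbb{R}^n_+)} \le c\|\mathcal{F}(\cdot,s)\|_{\dot B^{\beta}_{p_1 q}(\mathbb{R}^n_+)},
\]
while the representations \eqref{0427-1}--\eqref{0427-2} show that $\mathbb{Q}_1 f$ has vanishing Dirichlet trace and $\mathbb{Q}_2 f$ is a Poisson integral of boundary data controlled by $f_n$ and $\nabla \mathbb{Q}_1 f$.

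I would then extend $\mathcal{F}$ and $\mathbb{Q}_1 f$ from $\mathbb{R}^n_+$ to $\mathbb{R}^n$, preserving both the vanishing boundary trace and the $\dot B^{\beta}_{p_1 q}$ norm, and integrate by parts in $y$ to transfer the spatial derivative in $\mbox{div}\,\mathcal{F}$ and in $\nabla \mathbb{Q}_1 f$ onto the heat kernel $\Gamma$. Because the boundary traces vanish no boundary term arises, leaving a full-space convolution of the form
\[
\int_0^t \nabla_x e^{(t-s)\Delta}\tilde g(\cdot,s)\, ds,
\]
where $\tilde g$ is the extension of $\mathcal{F}$ or of $\mathbb{Q}_1 f$. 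The harmonic piece $\mathbb{Q}_2 f$ is handled directly from its Poisson representation, using Lemmas \ref{poisson1}--\ref{lemma0929-22} to control it in the relevant Besov space. The standard heat-smoothing bound
\[
\|\nabla_x e^{t\Delta} g\|_{\dot B^{\alpha}_{pq}(\mathbb{R}^n)} \le c\, t^{-\sigma}\|g\|_{\dot B^{\beta}_{p_1 q}(\mathbb{R}^n)},\qquad \sigma = \frac{\alpha+1-\beta}{2}+\frac{n}{2}\Big(\frac{1}{p_1}-\frac{1}{p}\Big),
\]
combined with the scaling hypothesis $\alpha - \beta - 1 + n(\frac{1}{p_1}-\frac{1}{p}) + \frac{2}{q_1}-\frac{2}{q} = 0$, gives $\sigma = 1 - \frac{1}{q_1}+\frac{1}{q} \in (0,1]$ and, at the same time, the weak-Young balance $1 + \frac{1}{q} = \frac{1}{q_1} + \sigma$. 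Integrating in time against the kernel $s^{-\sigma}_+$ on $(0,\infty)$ by weak Young's inequality thus yields the $L^{q_1}\to L^q$ bound, closing the estimate for $\Gamma *\mathbb{P}f$. The bound for $\Gamma^*$ follows identically since the reflected kernel enjoys the same pointwise smoothing after the reflection $y_n\mapsto -y_n$.

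The main obstacle is the extension step in the borderline regime $\beta > 1 + 1/p_1$, where neither a plain zero nor an odd extension preserves the $\dot B^{\beta}_{p_1 q}$ norm. Here one must exploit the finer structure of the Helmholtz decomposition -- specifically the Dirichlet boundary condition of $\mathbb{Q}_1 f$ and the explicit Poisson-integral form of $\mathbb{Q}_2 f$ -- to recast the integration by parts in terms of $\mathcal{F}$ itself, which does satisfy the vanishing-trace hypothesis. Once this technical point is navigated, the heat smoothing and weak-Young integration steps are classical.
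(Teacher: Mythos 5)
Your overall strategy (integrate by parts onto the heat kernel, apply a pointwise-in-time smoothing estimate, then convolve in time) is close in spirit to the paper's, but two steps do not cover the full range of the lemma. The more serious one is the time integration. Your exponent is $\sigma=1-\frac1{q_1}+\frac1q$, so $\sigma=1$ exactly when $q_1=q$, a case the hypotheses expressly allow ($q_1\le q$). For $\sigma=1$ the kernel $\tau^{-1}$ is not in any weak Lebesgue class for which Young's inequality applies, and convolution with $\tau^{-1}$ on $(0,\infty)$ is simply unbounded on $L^{q}$; no absolute-value estimate of the form you propose can work there, since this is the parabolic maximal-regularity endpoint where cancellation is essential. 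The paper treats this endpoint separately by the parabolic Calder\'on--Zygmund estimate \eqref{0418-3} together with \eqref{h4} and \eqref{h5}, and then reaches the intermediate cases by complex interpolation with exponent $\te=1+\be-\al$ between the Hardy--Littlewood--Sobolev regime \eqref{h3} (which is what your weak-Young step produces, and which does require $q_1<q$ strictly) and the Calder\'on--Zygmund endpoint. Without an ingredient of this type your argument proves the lemma only for $q_1<q$.

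The second gap is the reduction to a full-space convolution. You extend ${\mathcal F}$ and ${\mathbb Q}_1f$ to $\R$ ``preserving both the vanishing boundary trace and the $\dot B^{\be}_{p_1q}$ norm,'' and you yourself flag that no such extension is available once $\be$ is large; since case (2) of the lemma allows $1<\be\le 2$, the sentence ``once this technical point is navigated'' is precisely where the remaining work lies, and you do not supply it. The paper never extends: it integrates by parts directly in the half-space against the reflected kernel $\Ga(x-y^*,t-\tau)$ (the boundary terms vanish because ${\mathcal F}|_{x_n=0}=0$), rewrites the normal component $({\mathbb P}f)_n$ via Lemma \ref{lemma0523-3} and the identity $D_{x_n}^2A=-\De'A+\sum_kD_{x_k}F_{nk}$ so that only tangential derivatives and terms controlled by Lemma \ref{hemoz} appear (see \eqref{0421-1}--\eqref{0421-2}), and then obtains the fractional statement by real and complex interpolation of the integer-order estimates \eqref{h1}, \eqref{h2}, \eqref{h4} and \eqref{0418-3}, rather than by a fixed-time Besov-to-Besov smoothing bound. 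Recasting your argument along these lines (or otherwise resolving the high-regularity extension and the $q_1=q$ endpoint) is needed before it closes.
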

\begin{proof}
The proof  of Lemma \ref{0929-1} is given in Appendix \ref{appendix0292-1}.
\end{proof}

\section{Proof of  Theorem \ref{thm-stokes}}
\label{theoremthm-stokes}
\setcounter{equation}{0}

First, we decompose the Stokes equation \eqref{maineq-stokes} as the following two equations:
\begin{align}
\label{stokes.zero}
\notag v_t - \De v+\nabla \pi  =0, \qquad {\rm div} \, v =0 \qquad \mbox{ in
}\,\,\R_+ \times (0,\infty),\\
v|_{t =0} = h \quad \mbox{ and }\quad v|_{x_n =0} =0,
\end{align}
and
\begin{align}\label{maineq-stokesh=0}
\begin{array}{l}\vspace{2mm}
V_t - \De V + \na \Pi =\mbox{div}{\mathcal F}, \qquad \mbox{div } V =0, \mbox{ in }
 \R_+\times (0,\infty),\\
\hspace{30mm}V|_{t=0}= 0, \qquad  V|_{x_n =0} = 0.
\end{array}
\end{align}

Let $u = V + v$ and $p =\pi+\Pi$. Then, $(u,p)$ is solution of  \eqref{maineq-stokes}.

\subsection{Estimate of $(v,\pi)$}

Define $(v,\pi)$ by \begin{equation}\label{expression-v-zero}
v_i (x,t) = \int_{{\mathbb R}^n_+} G_{ij}(x,y, t)
h_j(y) dy,
\end{equation}
\begin{equation}\label{expression-p-zero}
\pi(x,t) = \int_{{\mathbb R}^n_+} P(x,y, t)
\cdot  h (y)dy,
\end{equation}
where $G$ and $P$ are defined by
\begin{align}\label{formulas-v}
\notag G_{ij} &= \de_{ij} (\Ga(x-y, t) - \Ga(x-y^*,t ))\\
& \qquad  + 4(1 -\de_{jn})
\frac{\pa}{\pa x_j} \int_0^{x_n} \int_{{\mathbb R}^{n-1}}
            \frac{\pa N(x-z)}{\pa x_i} \Ga(z -y^* , t) dz,
\end{align}
\begin{align}\label{formulas-p}
\notag P_j(x,y,t) & =4 (1 - \de_{jn}) \frac{\pa }{\pa x_j}\Big[ \int_{{\mathbb
R}^{n-1}} \frac{\pa N(x' - z', x_n)}{\pa x_n} \Ga(z' -y', y_n,t) dz'\\
& \quad +\int_{{\mathbb R}^{n-1}} N(x' -z',x_n) \frac{\pa \Ga(z'-y', y_n,
t)}{\pa y_n} dz'\Big].
\end{align}
Then $(v,\pi)$ satisfies   \eqref{stokes.zero} (see  \cite{So}).

From Section 4 in \cite{CK}, we have the following estimate;
\begin{align*}
\| v\|_{L^q (0,\infty;    \dot H^{\al_i }_{p} (\R_+)) } & \leq c \|    h
\|_{\dot B^{\al_i -\frac2q}_{p q,0} ({\mathbb R}^n_+)}, \qquad  0\leq \al_i\leq 2, \quad i = 1,2.
\end{align*}
Using the property of real interpolation (see \eqref{interpolation1}, \eqref{interpolation1-2} and \eqref{realinterpolation2}), we have
\begin{align}\label{CK120-april10}
\| v\|_{L^q (0,\infty;    \dot B^{\al }_{pq} (\R_+)) } & \leq c \|    h
\|_{\dot B^{\al -\frac2q}_{p q,0} ({\mathbb R}^n_+)}, \qquad  0\leq \al\leq 2.
\end{align}

For $\al>\frac{1}{p}$,  the  following estimates for  $\pi$  also hold.
\begin{lemm}\label{lemma-1-2-zero}
Let $ 1 < p, q<\infty$ and  $\frac{1}{p}< \al <2$. Then  $\pi$ can be decomposed in the form   $\pi=\sum_{j=1}^{n-1}D_{y_j}\pi_{0j}+D_t\pi_{00}$ with
\begin{align*}
  \|  \pi_{0j}\|_{L^q (0,\infty; \dot H^{\al}_p (\R_+)) }+\|  \pi_{00}\|_{L^q (0,\infty; \dot H^{\al+1}_p (\R_+)) }
  \leq c \|    h
\|_{B^{\al -\frac2q}_{pq,0} ({\mathbb R}^n_+)}.
\end{align*}
\end{lemm}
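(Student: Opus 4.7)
The plan is to read off the decomposition directly from the explicit pressure kernel \eqref{formulas-p}, treating the statement's $D_{y_j}$ as a typo for $D_{x_j}$ (the variable of differentiation being $x$). Each $P_j$ carries an outer factor $4(1-\delta_{jn})\partial_{x_j}$, so Fubini already gives $\pi=\sum_{j=1}^{n-1}D_{x_j}(\mathcal{A}_j+\mathcal{B}_j)$ with $\mathcal{A}_j:=4\int_{\R_+}A(x,y,t)\,h_j(y)\,dy$ and $\mathcal{B}_j:=4\int_{\R_+}B(x,y,t)\,h_j(y)\,dy$. I expect the Poisson-type piece $\mathcal{A}_j$ to fit directly into the $\sum_jD_{x_j}\pi_{0j}$ sum, while the singular factor $\partial_{y_n}\Gamma$ inside $B$ will generate the $D_t\pi_{00}$ contribution after rewriting via the heat equation.

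For $\mathcal{A}_j$, swapping the $z'$-integral outside yields $\mathcal{A}_j(x,t)=4\int_{\Rn}\partial_{x_n}N(x'-z',x_n)F_j(z',t)\,dz'$ with $F_j(z',t)=(\Gamma_t*\tilde h_j)(z',0)$ the trace at $z_n=0$ of the heat extension of the zero extension of $h_j$. Chaining Lemma~\ref{gauss.equiv}, Lemma~\ref{trace}(1) (this is where the hypothesis $\alpha>\frac{1}{p}$ enters), and Lemma~\ref{poisson1} gives $\|\mathcal{A}_j\|_{L^q(\dot H^\alpha_p(\R_+))}\le c\|h_j\|_{\dot B^{\alpha-\frac{2}{q}}_{pq,0}(\R_+)}$.

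The key step is $\mathcal{B}_j$. Using that $\Gamma(z'-y',y_n,t)$ satisfies $\partial_t\Gamma=\Delta_{z'}\Gamma+\partial_{y_n}^2\Gamma$, integrating $\partial_{y_n}^2\Gamma=\partial_t\Gamma-\Delta_{z'}\Gamma$ in $y_n$ from $y_n$ to $\infty$ (using $\partial_{y_n}\Gamma\to 0$ at infinity) yields $\partial_{y_n}\Gamma=-\partial_t G+\Delta_{z'}G$ with $G(z'-y',y_n,t):=\int_{y_n}^\infty\Gamma(z'-y',\eta,t)\,d\eta$. Substituting into $B$ and integrating by parts twice in $z_k$ (via $\partial_{z_k}N=-\partial_{x_k}N$ for $k<n$) promotes $\Delta_{z'}$ to $\Delta_{x'}$, giving $B=(-\partial_t+\Delta_{x'})\mathcal{G}$ where $\mathcal{G}(x,y,t):=\int_{\Rn}N(x'-z',x_n)G(z'-y',y_n,t)\,dz'$. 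Setting $\Psi_j:=4\int_{\R_+}\mathcal{G}\,h_j\,dy$ gives $\mathcal{B}_j=(-\partial_t+\Delta_{x'})\Psi_j$, and hence
\begin{align*}
\sum_{j=1}^{n-1}D_{x_j}\mathcal{B}_j=-\partial_t\Big(\sum_jD_{x_j}\Psi_j\Big)+\sum_{k=1}^{n-1}D_{x_k}\Big(\sum_jD_{x_j}D_{x_k}\Psi_j\Big).
\end{align*}
I then set $\pi_{00}:=-\sum_jD_{x_j}\Psi_j$ and $\pi_{0\ell}:=4\mathcal{A}_\ell+\sum_jD_{x_j}D_{x_\ell}\Psi_j$.

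Both stated bounds reduce to showing $\Psi_j\in L^q(0,\infty;\dot H^{\alpha+2}_p(\R_+))$. Swapping the iterated $y_n,\eta$-integrations rewrites $\Psi_j=4\int_{\Rn}N(x'-z',x_n)(\Gamma_t*\tilde H_j)(z',0)\,dz'$ with $\tilde H_j(y',y_n):=\int_{-\infty}^{y_n}\tilde h_j(y',\eta)\,d\eta$ the $y_n$-antiderivative of the zero extension of $h_j$. Since $y_n$-antidifferentiation gains one derivative and Newton (rather than Poisson) extension provides one further derivative over the chain used for $\mathcal{A}_j$, the same Heat--Trace--Newton pipeline places $\Psi_j$ in $L^q(\dot H^{\alpha+2}_p(\R_+))$; real interpolation \eqref{interpolation1}--\eqref{realinterpolation2} is invoked if one wishes to pass between the $\dot H$ and $\dot B$ scales. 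I expect the main obstacle to be the rigorous justification of the heat-equation manipulation of $\partial_{y_n}\Gamma$---vanishing of the relevant boundary terms at $y_n=\infty$, Fubini for the $y_n,\eta$-iteration, and, most delicately, verifying that the anisotropic antiderivative $\tilde H_j$ genuinely lies in $\dot B^{\alpha-\frac{2}{q}+1}_{pq}(\R)$, which is most naturally addressed by a regularization argument with smooth, compactly supported data.
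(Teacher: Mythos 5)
Your splitting of $\pi$ and your treatment of the first kernel term agree with the paper (your $\mathcal{A}_j$ is exactly the paper's $\pi_{1j}$, estimated by the same Poisson--trace--Gaussian chain, and $D_{y_j}$ is indeed $D_{x_j}$), but the treatment of the second term has a genuine gap at precisely the point you flag as delicate. Your pipeline ultimately needs the bound $\|\tilde H_j\|_{\dot B^{\al+1-\frac2q}_{pq}(\R)}\le c\|h_j\|_{\dot B^{\al-\frac2q}_{pq,0}(\R_+)}$ for the vertical antiderivative $\tilde H_j(y',y_n)=\int_{-\infty}^{y_n}\tilde h_j(y',s)\,ds$, and this is false, not merely unjustified. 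Antidifferentiation in the single variable $y_n$ does not gain an isotropic derivative: the operator $D_{y'}D_{y_n}^{-1}$ has the unbounded symbol $\xi'/\xi_n$. Concretely, take $h_j(y)=a(y')b(y_n)$ with $\int_0^\infty b\neq 0$ and $a$ nonconstant; then $\tilde H_j(y',y_n)\to a(y')\int_0^\infty b$ as $y_n\to+\infty$, so the Littlewood--Paley blocks of $\tilde H_j$ fail to be $L^p$-integrable in the $y_n$-direction and the left-hand side is $+\infty$ while the right-hand side is finite. A regularization argument with smooth compactly supported data cannot repair this, because what fails is the uniform inequality itself, not the formal manipulations (your identity $\partial_{y_n}\Gamma=-\partial_t G+\Delta_{z'}G$ and the Fubini exchange are fine).

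The paper's proof diverges from yours exactly here: it never introduces an antiderivative. After moving $D_{x_j}$ onto $\Gamma$ (integrating by parts in $z_j$ and then in $y_j$), it invokes the solenoidality of $h$, $\sum_{j<n}D_{y_j}h_j=-D_{y_n}h_n$, so the second term becomes $-4\int_{\Rn}N(x'-z',x_n)\int_{\R_+}D_{y_n}\Gamma\,D_{y_n}h_n\,dy\,dz'$; one more integration by parts in $y_n$ together with the heat equation $D_{y_n}^2\Gamma_t*h_n=D_t\Gamma_t*h_n-\Delta'\Gamma_t*h_n$ yields $D_t\pi_{00}+\sum_{k\neq n}D_{x_k}\pi_{2k}$, where $\pi_{00}$ and $\pi_{2k}$ are built from $N$ applied to the boundary trace of $\Gamma_t*h_n$ and are estimated by the same chain you used for $\mathcal{A}_j$ (the extra derivative for $\pi_{00}$ coming from $N$ versus $D_{x_n}N$, as in your Newton-versus-Poisson remark). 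Note that the hypothesis ${\rm div}\,h=0$ is essential to the paper's argument and is never used in yours, and that the paper's $\pi_{00}$ is built from $h_n$ while yours is built from the tangential components; to salvage your route you would have to estimate the composite operator $f\mapsto N\big((\Gamma_t*\partial_{y_n}^{-1}\tilde f)|_{x_n=0}\big)$ directly, exploiting the Gaussian damping in $y_n$ at the boundary, which the quoted lemmas do not provide.
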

\begin{proof}

The proof of Lemma \ref{lemma-1-2-zero} is given in Appendix \ref{appendix.lemma-1-2-zero}.
\end{proof}


\subsection{Estimate of $(V,\Pi)$}

 Let  $f = {\mathbb P}f + \na {\mathbb Q} f$ be  the  decomposition of $f$, where ${\mathbb P}$ and ${\mathbb Q}$ are the operator defined in section \ref{projection}. Note that ${\rm div}\, {\mathbb P} f =0$ and  $({\mathbb P} f)_n|_{x_n =0} =0$.  We define  $(V, \Pi_0)$  by
\begin{equation}\label{expression-v}
V_i (x,t) =\int_0^t \int_{{\mathbb R}^n_+} G_{ij}(x,y, t-\tau)
({\mathbb P} f)_j(y,\tau) dyd\tau,
\end{equation}
\begin{equation}\label{expression-p}
\Pi_0(x,t) =\int_0^t \int_{{\mathbb R}^n_+} P(x,y, t-\tau)
\cdot  ({\mathbb P} f) (y,\tau)dyd\tau,
\end{equation}
where $G$ and $P$ are defined by
\eqref{formulas-v}and \eqref{formulas-p}.
Then  $(V, \Pi_0)$ satisfies
\begin{align*}
\begin{array}{l}\vspace{2mm}
V_t - \De V + \na \Pi_0 ={\mathbb P} f, \qquad \mbox{div } V =0, \mbox{ in }
 \R_+\times (0,\infty),\\
\hspace{30mm}V|_{t=0}= 0, \qquad  V|_{x_n =0} = 0.
\end{array}
\end{align*}
(See \cite{So}.) Let $\Pi = \Pi_0 + {\mathbb Q}f$. Then, $(V, \Pi)$ is solution of \eqref{maineq-stokesh=0}.

Let $1<p<\infty$ and $0 \leq \al \leq 2$.  In Section 3 in  \cite{CK}, the authors showed that
 $V, \Pi_0 $ defined by
\eqref{expression-v} and \eqref{expression-p}   have the following estimates (using real interpolations); if $0 < \al <2$, then
\begin{align*}
\|   V \|_{  L^q(0,\infty;\dot B^{\al}_{pq}(\R_+)) } &
   \leq c \big( \|   \Gamma* {\mathbb P} f\|_{  L^q(0,\infty; \dot B^{\al}_{pq}(\R_+) )} +
  \|   \Gamma^* * {\mathbb P} f \|_{ L^q(0,\infty; \dot B^{\al}_{pq}(\R_+))} \big).
\end{align*}

Using Lemma \ref{hemoz} and  Lemma \ref{0929-1}, the following theorem holds:
Let $1 < p_1 \leq p < \infty, \, 1 < q_1 \leq q < \infty$,  $0 \leq \be \leq  \al$ and
 $ 0=\al -\be -1 +n (\frac{1}{p_1}-\frac{1}{p})+\frac2{q_1}-\frac2{q}$.
 Then, for  ${\mathcal F} \in L^{q_1} (0, \infty, \dot B^\be_{p_1q}(\R_+))$ with ${\mathcal F}|_{x_n =0} =0$, we have
\begin{align}
\label{0417-2}
\|   V \|_{L^q(0,\infty;  \dot B^{\al}_{pq}(\R_+)) } &
   \leq c \| {\mathcal F} \|_{L^{q_1}(0,\infty; \dot B^\be_{p_1 q}(\R_+))  }, \quad 0 < \al <2.
\end{align}
On the other hand, by Lemma \ref{hemoz} the following estimate hold for ${\mathbb Q}f$.
\begin{align}
\label{0910-1}
\| {\mathbb Q} f\|_{L^{q_1}(0,\infty;\dot B^\be_{p_1 q} (\R_+))} \leq c \|{\mathcal F}\|_{L^{q_1}(0,\infty;\dot B^\be_{p_1 q} (\R_+))}.
\end{align}

For $\al>\frac{1}{p}$,  the  following estimates for  $\Pi_0$  also hold.
\begin{lemm}\label{lemma-1-2}
Let $ 1 < p, q<\infty$ and  $\frac{1}{p}< \al <2$. Let  $p_1, q_1$ and $\be$  satisfy the same conditions in Lemma \ref{0929-1}.
Let $f = {\rm div}\, {\mathcal F}$ for ${\mathcal F} \in L^{q_1} (0,\infty;  \dot B^\be_{p_1q} (\R_+))$
 with ${\mathcal F}|_{x_n =0} =0$.
Then $\Pi_0=\sum_{j=1}^{n-1}D_{y_j}\Pi_{0j}+D_t\Pi_{00}$ with
\begin{align*}
  \|  \Pi_{0j}\|_{L^q (0,\infty; \dot B^{\al}_{pq} (\R_+)) }+\|  \Pi_{00}\|_{L^q (0,\infty; \dot B^{\al+1}_{pq} (\R_+)) }
  & \leq  c\| {\mathcal F}\|_{L^{q_1}(0,\infty; \dot B^\be_{p_1q} (\R_+))}.
\end{align*}
\end{lemm}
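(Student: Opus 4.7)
Start from the explicit Solonnikov representation \eqref{expression-p}--\eqref{formulas-p}. Since $P_j\equiv 0$ when $j=n$ and for $j=1,\ldots,n-1$ the kernel $P_j$ carries an outer tangential derivative $\frac{\pa}{\pa x_j}$, I would pull this derivative out of the $(y,\tau)$--convolution to obtain
\begin{align*}
\Pi_0(x,t) \;=\; \sum_{j=1}^{n-1} D_{x_j} \int_0^t\!\!\int_{\R_+} K_j(x,y,t-\tau)\,({\mathbb P}f)_j(y,\tau)\,dy\,d\tau,
\end{align*}
where $K_j=K_j^{(1)}+K_j^{(2)}$ with
\begin{align*}
K_j^{(1)}(x,y,t) &= 4\int_{\Rn} \frac{\pa N(x'-z',x_n)}{\pa x_n}\,\Ga(z'-y',y_n,t)\,dz',\\
K_j^{(2)}(x,y,t) &= 4\int_{\Rn} N(x'-z',x_n)\,\frac{\pa \Ga(z'-y',y_n,t)}{\pa y_n}\,dz'.
\end{align*}

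For $K_j^{(1)}$, I would observe that the $z'$--integral realizes $K_j^{(1)}\ast({\mathbb P}f)_j$ as the Poisson extension in the $x_n$--variable (through $\pa_{x_n}N$) of a tangential heat convolution acting on $({\mathbb P}f)_j$. Applying Lemma \ref{poisson1} in $x_n$, Lemma \ref{gauss.equiv} in $t$, and Lemma \ref{0929-1} together with Lemma \ref{hemoz} to dispose of ${\mathbb P}f=f-\na{\mathbb Q}f$, this term delivers a function $\Pi_{0j}^{(1)}\in L^q(0,\infty;\dot B^{\al}_{pq}(\R_+))$ bounded by $\|{\mathcal F}\|_{L^{q_1}(0,\infty;\dot B^{\be}_{p_1q}(\R_+))}$. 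The hypothesis $\al>\frac{1}{p}$ enters through Lemma \ref{trace}(1), which is needed to make sense of the trace used when identifying the Poisson factor.

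The crux lies in $K_j^{(2)}$, where $\pa_{y_n}\Ga$ sits \emph{inside} the convolution and cannot be moved outside directly. The plan is to trade $\pa_{y_n}$ for $\pa_t$ via the heat equation: integrate by parts in $y_n$ on $\R_+$ against $({\mathbb P}f)_j$ (the boundary term vanishes thanks to $({\mathbb P}f)_n|_{y_n=0}=0$ and $\mbox{div}\,{\mathbb P}f=0$), and then invoke the identity
\begin{align*}
\pa_{y_n}^2\Ga(z'-y',y_n,t-\tau) \;=\; \pa_t\Ga(z'-y',y_n,t-\tau) - \De_{z'}\Ga(z'-y',y_n,t-\tau).
\end{align*}
This converts $K_j^{(2)}\ast({\mathbb P}f)_j$ into the sum of (i) a $\pa_t$--piece, which after pulling the time derivative outside the $\tau$--integral produces the $D_t\Pi_{00}$ summand with $\Pi_{00}$ gaining one order of regularity (hence $\Pi_{00}\in L^q(\dot B^{\al+1}_{pq})$), and (ii) tangential second-order pieces that can be rearranged as further $D_{x_j}$ derivatives and absorbed into $\sum_j D_{x_j}\Pi_{0j}$. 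The quantitative bounds for both pieces again reduce to applications of Lemma \ref{poisson1}, Lemma \ref{hemoz}, and Lemma \ref{0929-1} to convolutions of the form $\Ga\ast{\mathbb P}f$ and $\Ga^{\ast}\ast{\mathbb P}f$.

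The main obstacle will be the bookkeeping when performing the $y_n$--integration by parts and when distributing the heat-equation derivatives between $\pa_t$ and $\De_{z'}$; in particular I would need to check that each auxiliary potential produced at the end of every manipulation still matches the scaling identity $0=\al-\be-1+n(\frac{1}{p_1}-\frac{1}{p})+\frac{2}{q_1}-\frac{2}{q}$ required by Lemma \ref{0929-1}, and that no boundary trace is taken at a regularity level below $\frac{1}{p}$. Summing the contributions over $j=1,\ldots,n-1$ then yields the decomposition $\Pi_0=\sum_{j=1}^{n-1}D_{x_j}\Pi_{0j}+D_t\Pi_{00}$ with the announced estimate.
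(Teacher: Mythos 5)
Your proposal follows essentially the same route as the paper: split $\Pi_0$ into the $\pa_{x_n}N$-piece (handled as a Poisson extension of the trace of $\Ga*({\mathbb P}f)_j$ via \eqref{Poisson}, Lemma \ref{trace} with $\al>\frac1p$, and Lemma \ref{0929-1}) and the $\pa_{y_n}\Ga$-piece, which is converted through integration by parts, the divergence-free identity $D_{y_n}({\mathbb P}f)_n=-\sum_{j<n}D_{y_j}({\mathbb P}f)_j$, and the heat-equation identity $\pa_{y_n}^2\Ga=\pa_t\Ga-\De_{y'}\Ga$ into $D_t\Pi_{00}$ plus tangential pieces. Only two small points: the $y_n$ integration by parts must be performed \emph{after} using the divergence-free condition to pass to $({\mathbb P}f)_n$ (whose trace vanishes), since $({\mathbb P}f)_j|_{x_n=0}$ for tangential $j$ need not be zero, and the relevant time-convolution estimate is Lemma \ref{0929-1} rather than Lemma \ref{gauss.equiv}, which concerns the heat semigroup on initial data.
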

\begin{proof}

The proof of Lemma \ref{lemma-1-2} is given in Appendix \ref{appendix.lemma-1-2}.
\end{proof}

\subsection{Estimate of $(u,p)$}

Note that $(u, p)$ defined by $u = V + v$ and $p = \pi + \Pi_0 + {\mathbb Q} \,  {\rm div} \, {\mathcal F}$ are solution of \eqref{maineq-stokes}.

From \eqref{CK120-april10} and  \eqref{0417-2}, we obtain \eqref{0411-1-1}.

Let $p_1 = \pi_{00} + \Pi_{00}$, $P_j = \pi_{0j} + \Pi_{0j}$ and $P_0 = {\mathbb Q} \, {\rm div}\, {\mathcal F}$, where $\pi_{00}$ and $\pi_{0j}$ are defined in Lemma \ref{lemma-1-2-zero}, and $ \Pi_{00}$ and $\Pi_{0j}$ are defined in Lemma \ref{lemma-1-2}. Then, the corresponding pressure $p$ is decomposed by  $p = D_t p_1 + \sum_{j =1}^{j =n-1} P_j + P_0$. From \eqref{0910-1}, Lemma \ref{lemma-1-2-zero} and Lemma \ref{lemma-1-2}, we get \eqref{main0910}. Hence, we complete the proof Theorem \ref{thm-stokes}.

\section{Nonlinear problem}

\label{nonlinear}
\setcounter{equation}{0}

In this section, we would like to give  proofs of Theorem \ref{thm-navier} and Theorem \ref{maintheopressure}. For the purpose of them, we  construct approximate velocities and then
derive uniform convergence in  $L^q(0,\infty; \dot B^\al_{pq}(\R_+))$.

{\color{red}{\subsection{$p_0, \, q_0, \, p_1, \, p_2,  \, q_1$ and $ \be$} \label{subsectionpqs}


Let $(\al,p,q)$ satisfy $ 1 < p, q < \infty$, $0 < \al <2$ and $\al +1 =\frac{n}p +\frac2q$. We take $ \ep_1, \ep_2  \in (0, 1)$ satisfying
\begin{align}\label{0827-1}
\notag & \hspace{37mm} 0 < \ep_1 < \min(1,   2 -\frac2q),\\
&\max(0, 1-\al, 1 -\frac{n}p  ) < \ep_1 +\ep_2 < \min (1, 2-\al).
\end{align}

Moreover, if $1 \leq \al$, then we take $ \ep_1, \, \ep_1 \in  (0, 1) $ satisfying
\begin{align}\label{0827-2}
\notag 0< \ep_1  < \min(1,   2 -\frac2q),\\
\max(0, 2-\al, 1 -\frac{n}p) < \ep_1 +\ep_2 <1.
\end{align}

Let $\frac2{q_1} =\frac{2}q +\ep_1$, $\frac{n}{p_1} = \frac{n}p +\ep_2$,  $\frac{n}{p_0} =1 -\ep_1$, $\frac2{ q_0} =\ep_1$, $\frac{n}{p_2} = -1 +\ep_1 + \ep_2 + \frac{n}p  $ and  $\be = \al -1 +\ep_1 +\ep_2$. Then, $1<  p_0, p_1, p_2,   q_0, q_1<\infty $ and $  0<\be  $  satisfy

\begin{align*}
1<p_1< p <  p_2,  \, 1<q_1< q, \,\,\, \frac{n}{p_0} +\frac2{q_0} =1,\,\,\, \be=\al-1+n(\frac{1}{p_1}-\frac{1}{p})+\frac{2}{q_1}-\frac{2}{q}, \\
\frac1{p_1} =\frac1{p_2} +\frac1{p_0}, \,\,\, \be-\frac{n}{p_2} =\al-\frac{n}{p},\,\,\,   (0  \mbox{  or  } 1)<\be< \al ,\,\,\, \frac{1}{q_1}=\frac{1}{q}+\frac{1}{q_0}.
\end{align*}

}}

\subsection{Approximating solutions}

Let $(u^1,p^1)$ be the solution of the Stokes equations
\begin{align}
\begin{array}{l}\vspace{2mm}
u^1_t - \De u^1 + \na p^1 =0, \qquad \mbox{div } u^1 =0, \mbox{ in }
 \R_+\times (0,\infty),\\
\hspace{30mm}u^1|_{t=0}= h, \qquad  u^1|_{x_n =0} = 0.
\end{array}
\end{align}
Let $m\geq 1$.
After obtaining $(u^1,p^1),\cdots, (u^m,p^m)$ construct $(u^{m+1}, p^{m+1})$ which satisfies the following equations
\begin{align}
\label{maineq5}
\begin{array}{l}\vspace{2mm}
u^{m+1}_t - \De u^{m+1} + \na p^{m+1} =f^m, \qquad \mbox{div } u^{m+1} =0, \mbox{ in }
 \R_+\times (0,\infty),\\
\hspace{30mm}u^{m+1}|_{t=0}= h, \qquad  u^{m+1}|_{x_n =0} = 0,
\end{array}
\end{align}
where $f^m=-\mbox{div}(u^m\otimes u^m)$.

\subsection{Uniform boundedness in $L^{q_0}(0,\infty;L^{p_0}(\R_+))$}
\label{uniform1}


Let $1<p_0,q_0<\infty$ with
\begin{equation}
\label{con-1}
\frac{n}{p_0} + \frac2{q_0} =1.
\end{equation}
(Observe that $n<p_0<\infty,\ 2<q_0<\infty$ and $-1+\frac{n}{p_0}=-\frac{2}{q_0}$.) From  Theorem \ref{thm-stokes}, we have
\begin{align}
\label{uc1-1}
\| u^{1}\|_{L^{q_0}(0,\infty;  L^{p_0} (\R_+))}
 \leq c_0  \|h\|_{  \dot B^{-1+\frac{n}{p_0}}_{p_0q_0,0}({\mathbb
R}^{n}_+)}:=N_0.
\end{align}
From Theorem \ref{thm-stokes}, taking $p_1 =\frac{p_0}2$, $q_1 =\frac{q_0}2$ and $\al =\be =0$, we have 
\begin{align}\label{1011-1-2}
\notag  \| u^{m+1}\|_{L^{q_0} (0,\infty; L^{p_0} (\R_+))} & \leq  c \big( \| h\|_{\dot B^{-\frac2{q_0}}_{p_0q_0,0} (\R_+)}  +    \| u^{m} \otimes u^{m} \|_{L^{\frac{q_0}2}(0,\infty; L^{\frac{p_0}2} (\R_+))} \big)\\
  & \leq  c_1 \big( \| h\|_{\dot B^{-\frac{2}{q_0}}_{p_0q_0,0} (\R_+)}  +    \| u^{m}  \|^2_{L^{q_0}(0,\infty; L^{p_0} (\R_+))} \big).
\end{align}

Under the hypothesis  $\|u^m\|_{L^{q_0}(0, \infty; L^{p_0} (\R_+) )}\leq M_0$,  \eqref{1011-1-2} leads to the estimate
\[
\|u^{m+1}\|_{L^{q_0}(0, \infty; L^{p_0} (\R_+) )}\leq c_1 \big( N_0+  M_0^2 \big).
\]
Choose $M_0$ and $N_0$  so small that
\begin{align}\label{0508-2}
 M_0  \le \frac{1}{2c_1}\mbox{ and }N_0<\frac{M_0}{2c_1}.
\end{align}
By the mathematical induction argument, we conclude
\begin{align}\label{0509-1}
\|u^{m}\|_{L^{q_0}(0, \infty; L^{p_0}(\R_+) )}\leq M_0 \,  \mbox{ for all } \, m=1,2\cdots.
\end{align}

\subsection{Uniform boundedness in $L^{q}(0,\infty;\dot B^\al_{pq}(\R_+))$}
\label{uniform2}

Let
\begin{equation}
\label{con-2}
\frac{n}p + \frac2q =1 +\al.
\end{equation}
(Observe that $\frac{n}{\al+1}<p<\infty$, $\frac{2}{\al+1}<q<\infty$ and $-1+\frac{n}{p}=\al-\frac{2}{q}$.) From  Theorem \ref{thm-stokes}, we have
\begin{align}
\label{0509-1-1}
\| u^{1}\|_{L^{q}(0,\infty;  \dot B^\al_{pq} (\R_+))}
& \leq c_2 \|h\|_{  \dot B^{-1+\frac{n}{p}}_{pq,0}({\mathbb
R}^{n}_+)}:=N.
\end{align}

Let $p_0, \, q_0, \, p_1, \, q_1$ and $ \be$  are constants defined in Subsection \ref{subsectionpqs}.
We  apply  Theorem \ref{thm-stokes} and Lemma \ref{0510prop}, respectively, to obtain
\begin{align}\label{1011-1-2-1}
 \| u^{m+1}\|_{L^{q} (0,\infty; \dot{B}^\al_{pq} (\R_+))} & \leq  c \big( \| h\|_{\dot B^{-1+\frac n{p}}_{pq,0} (\R_+)}  +    \| u^{m} \otimes u^{m} \|_{L^{{q_1}}(0,\infty; \dot{B}^\be_{p_1q} (\R_+))} \big)
\end{align}
and
\begin{align}\label{0510-3-1}
\notag\|(u^m\otimes u^m)\|_{L^{q_1} (0,\infty; \dot B^{\be}_{p_1q}(\R_+)) }
& \leq c \|u^m \|_{L^{q} (0,\infty; \dot B^{\be }_{p_2q}(\R_+)) } \|u^m  \|_{L^{ q_0} (0,\infty;L^{p_0}(\R_+)) }\\
& \leq c \|u^m \|_{L^{q} (0,\infty; \dot B^{\al }_{pq}(\R_+)) } \|u^m  \|_{L^{ q_0} (0,\infty;L^{p_0}(\R_+)) }.
\end{align}
From \eqref{1011-1-2-1}-\eqref{0510-3-1}, we have
\begin{align}\label{eq0412-3}
 \| u^{m+1}\|_{L^{q}(0,\infty; \dot B^\al_{pq}(\R_+))}
  \leq   c_1 \big( N
 +  \|u^m  \|_{L^{ q_0} (0,\infty;L^{p_0}(\R_+)) } \|u^m\|_{ L^{q}(0,\infty;\dot B^\al_{pq}({\mathbb R}^n_+))} \big).
\end{align}

Under the hypothesis
$\|u^m\|_{L^q(0, \infty; \dot B^\al_{pq}(\R_+) )}\leq M$,
\eqref{eq0412-3} leads to the estimate
\[
\|u^{m+1}\|_{L^q(0, \infty; \dot B^\al_{pq}(\R_+) )}\leq c_1 \big( N+  M_0M \big).
\]
Choose $M_0$, $N_0$ so small that
\begin{equation}
\label{0508-2-2} M_0 \leq  \frac1{2c_1}, \qquad  N_0\leq \frac{M_0}{2c_1}.
\end{equation}
and $M$ so large that
\[
 2c_1 N\leq M.\]
By the mathematical induction argument, we conclude
\begin{equation}
\label{0509-1-1}
\|u^{m}\|_{L^q(0, \infty; \dot B^\al_{pq}(\R_+) )}\leq M \mbox{ for all }m=1,2\cdots.
\end{equation}

\subsection{Uniform convergence}

Let $U^m=u^{m+1}-u^m$ and $P^m=p^{m+1}-p^m$.
Then, $(U^m,P^m)$ satisfy the equations
\[
\begin{array}{l}\vspace{2mm}
U^m_t - \De U^m + \na P^m =-\mbox{div}(u^m\otimes U^{m-1}+U^{m-1}\otimes u^{m-1}), \qquad \mbox{div } U^{m} =0, \mbox{ in }
 \R_+\times (0,\infty),\\
\hspace{30mm}U^{m}|_{t=0}= 0, \qquad  U^{m}|_{x_n =0} =0.
\end{array}
\]

Recall  the uniform estimates \eqref{0509-1} and \eqref{0509-1-1} for the approximate solutions. From  Theorem \ref{thm-stokes} and  Lemma \ref{0510prop} we have
\begin{align}\label{0425-3}
\notag\| U^m\|_{L^{q_0}(0, \infty; L^{p_0} (\R_+))}
& \leq c \big( \| u^{m-1}\|_{L^{q_0}(0, \infty; L^{p_0}(\R_+))}  + \| u^{m}\|_{L^{q_0}(0, \infty; L^{p_0}(\R_+))}  \big) \| U^{m-1}\|_{L^{q_0}(0, \infty; L^{p_0}(\R_+))}\\
 & \leq c_5 M_0 \| U^{m-1}\|_{L^{q_0}(0, \infty; L^{p_0}(\R_+))},
\end{align}
%
%
and
\begin{align}\label{0425-2}
\notag \|U^m\|_{L^q(0, \infty; \dot B^\al_{pq}(\R_+) )}
&\leq c\|u^m\otimes U^{m-1}+U^{m-1}\otimes u^{m-1}\|_{L^{q_1}(0,\infty;\dot B^{\be}_{p_1q_1} ({\mathbb R}^n_+))}\\
\notag &\leq c \big(\|u^m\|_{L^q(0, \infty; \dot B^\al_{pq}(\R_+) )} + \|u^{m-1}\|_{L^q(0, \infty; \dot B^\al_{pq}(\R_+) )} \big) \|U^{m-1}\|_{L^{ q_0}(0, \infty;  L^{p_0}(\R_+) )}\\
    \notag &\qquad + c\big( \|u^m\|_{L^{ q_0}(0, \infty; L^{p_0}(\R_+) )}  + \|u^{m-1}\|_{L^{ q_0}(0, \infty; L^{p_0}(\R_+) )} \big)\|U^{m-1}\|_{L^q(0, \infty; \dot B^\al_{pq}(\R_+) )}  \big)\\
     &\leq c_6 M \|U^{m-1}\|_{L^{q_0}(0, \infty;  L^{p_0}(\R_+) )}    +   c_6M_0\|U^{m-1}\|_{L^q(0, \infty; \dot B^\al_{pq}(\R_+) )}.
\end{align}
Here, $\al,\be,p_,p_0,p_1,q,q_0,q_1,N_0,M_0$ are the same numbers as in the previous subsection to satisfy the condition  \eqref{0508-2-2}.
Here, we take the constant $c_6$ greater than $c_5$, that is,
\begin{align}
c_6>c_5.
\end{align}

From \eqref{0425-3}, if $c_5M_0<1$, then $\sum_{m=1}^\infty\| U^m\|_{L^{q_0}(0, \infty; L^{p_0} (\R_+))}$ converges, that is,
$$\sum_{m=1}^\infty U^m\mbox{ converges in }L^{q_0}(0, \infty; L^{p_0} (\R_+)).$$

Take $A>0$ satisfying $A(c_6-c_5)M_0\geq c_6 M$. Then from \eqref{0425-3} and \eqref{0425-2} it holds that
\begin{align*}
 \|U^m\|_{L^q(0, \infty; \dot B^\al_{pq}(\R_+) )}+A\| U^m\|_{L^{q_0}(0, \infty; L^{p_0} (\R_+))}\\
 \leq c_6M_0(\|U^{m-1}\|_{L^q(0, \infty; \dot B^\al_{pq}(\R_+) )}+A\| U^{m-1}\|_{L^{q_0}(0, \infty; L^{p_0} (\R_+))})
 \end{align*}
 Again if $c_6M_0<1$, then $\sum_{m=1}^\infty(\|U^m\|_{L^q(0, \infty; \dot B^\al_{pq}(\R_+) )}+A\| U^m\|_{L^{q_0}(0, \infty; L^{p_0} (\R_+))})$ converges. This implies that
 $\sum_{m=1}^\infty\|U^m\|_{L^q(0, \infty; \dot B^\al_{pq}(\R_+) )}$ converges, that is, $$\sum_{m=1}^\infty U^m\mbox{ converges in }L^q(0, \infty; \dot B^\al_{pq}(\R_+) ).$$

Therefore, if $M_0$ satisfies the condition \eqref{0508-2-2} with the additional conditions
\begin{align}
M_0< \frac{1}{c_6},
\end{align}
then  $u^m=u^1+\sum_{k=1}^mU^{k}$ converges to $u^1+\sum_{k=1}^\infty U^{k}$ in  $L^q(0, \infty; \dot B^\al_{pq}(\R_+) ) \cap L^{q_0}(0, \infty; L^{p_0} (\R_+))$.
Set $u:=u^1+\sum_{k=1}^\infty U^{k}.$

\subsection{Existence}

Let $u$ be the same one constructed in  the previous Section.
By the lower semi continuity we have
\[
\|u\|_{L^{q_0}(0,\infty;L^{p_0}(\R_+))}\leq \limsup_{n\geq 1}\|u_n\|_{L^{q_0}(0,\infty;L^{p_0}(\R_+))}\leq M_0,\]
and
\[
\|u\|_{L^{q}(0,\infty;\dot{B}^{\al}_{pq}(\R_+))}\leq \limsup_{n\geq 1}\|u_n\|_{L^{q}(0,\infty;\dot{B}^{\al}_{pq}(\R_+))}\leq M.\]

In this section, we will show that $u$ satisfies weak formulation of Navier-Stokes equations \eqref{weaksolution-NS}, that is, $u$ is a weak solution of Navier-Stokes equations \eqref{maineq2} with appropriate distribution $p$.
Let $\Phi\in C^\infty_{0}(\overline{\R_+} \times [0,\infty))$ with $\mbox{div }\Phi=0$ and $\Phi|_{x_n=0}=0$.
Observe that
\begin{align*}
-\int^\infty_0\int_{\R_+} u^{m+1}\cdot \Delta\Phi dxdt&=\int^\infty_0\int_{\R_+}u^{m+1}\cdot \Phi_t+(u^m\otimes u^m): \nabla \Phi dxdt  +<h,\Phi(\cdot,0)>.
\end{align*}
Now, send $m$ to the  infinity, then,  $u^m\rightarrow u$ in $L^{q_0} (0, \infty; L^{p_0} (\R_+ ))$.   Since $n < p_0$ and $2 < q_0$,
 $u^m\otimes u^m\rightarrow u\otimes u$ in  $L^1_{loc} ( \R_+ \times [0, \infty ))$.
Hence,
we have the identity
\begin{align*}
-\int^\infty_0\int_{\R_+}u\cdot \Delta \Phi dxdt&=\int^\infty_0\int_{\R_+}u\cdot \Phi_t+(u\otimes u): \nabla\Phi dxdt +<h,\Phi(\cdot,0)>.
\end{align*}
Therefore  $u$ is a weak solution of Navier-Stokes equations \eqref{maineq2}.

\subsection{Uniqueness in space $L^{q_0}(0,\infty;L^{p_0}(\R_+))$}
Let 
  $ u_1\in  L^{q_0}(0,\infty; L^{p_0}(\R_+))$ be  another weak solution of Naiver-Stokes equations \eqref{maineq2} with pressure $p_1$. Then,
 $(u-u_1,p-p_1)$ satisfies the equations
\begin{align*}
(u-u_1)_t - \De (u-u_1) + \na (p-p_1)& =-\mbox{div}(u\otimes (u-u_1)+(u-u_1)\otimes u_1)\mbox{ in }
 \R_+\times (0,\infty), \\
 {\rm div} \, (u-u_1)& =0,
 \mbox{ in }\R_+\times (0,\infty),\\
 (u-u_1)|_{t=0}= 0, &\quad (u-u_1)|_{x_n =0} =0.
\end{align*}

Applying  the estimate of Theorem \ref{thm-stokes} in \cite{CJ2} to the above Stokes equations,  we have
\begin{align*}
\| u-u_1\|_{L^{q_0}(0,\tau; L^{p_0}{\mathbb R}^n_+ ))}
 \leq c \|u\otimes (u-u_1)+(u-u_1)\otimes u_1 \|_{L^{\frac{q_0}2}(0,\tau; L^{\frac{p_0}2}{\mathbb R}^n_+ ))}\\
 \leq c_5 ( \|u\|_{L^{q_0}(0,\tau; L^{p_0}({\mathbb R}^n_+ ))}+\|u_1\|_{L^{q_0}(0,\tau; L^{p_0}({\mathbb R}^n_+ ))})\| u-u_1\|_{L^{q_0}(0,\tau; L^{p_0}({\mathbb R}^n_+ ))}, \ \tau <\infty.
 \end{align*}
 Since  $u,u_1\in L^{q_0}(0,\infty; L^{p_0}({\mathbb R}^n_+ ))$, there is $ 0 <\de$ such that if $\tau_2 -\tau_1 \leq \de$ for $\tau_1 < \tau_2$, then
 \[ \|u\|_{L^{q_0}(\tau_1,\tau_2; L^{p_0}({\mathbb R}^n_+ ))}+\|u_1\|_{L^{q_0}(\tau_1,\tau_2; L^{p_0}({\mathbb R}^n_+ ))}<\frac{1}{c_5}\]
 (See Radon Nikodym theorem in \cite{rudin} for the reference).

 Hence, we have
\[
 \| u-u_1\|_{L^{q_0}(0,\delta; L^{p_0}({\mathbb R}^n_+ ))}
 <\| u-u_1\|_{L^{q_0}(0,\delta; L^{p_0}({\mathbb R}^n_+ ))}.\]
 This implies that
 $\| u-u_1\|_{L^{q_0}(0,\delta; L^{p_0}({\mathbb R}^n_+ ))}=0$, that is, $u\equiv u_1$ in $\R_+\times (0,\de]$.
Observe that  $u-u_1$ satisfies the Stokes equations
 \begin{align*}
(u-u_1)_t - \De (u-u_1) + \na (p-p_1)& =-\mbox{div}(u\otimes (u-u_1)+(u-u_1)\otimes u_1)\mbox{ in }
 \R_+\times (\de,\infty), \\
 \mbox{div } (u-u_1)& =0
 \mbox{ in }\R_+\times (\de,\infty),\\
 (u-u_1)|_{t=\de}= 0, &\quad (u-u_1)|_{x_n =0} =0.
\end{align*}
Again, applying  the estimate of Theorem \ref{thm-stokes}  in \cite{CJ2} to the above Stokes equations,  we have
\begin{align*}
\| u-u_1\|_{L^{q_0}(\de,2\de; L^{p_0}({\mathbb R}^n_+ ))}
& \leq c_5 ( \|u\|_{L^{q_2}(\de,2\de; L^{p_0}({\mathbb R}^n_+ ))}+\|u_1\|_{L^{q_0}(\de,2\de; L^{p_0}({\mathbb R}^n_+ ))})\| u-u_1\|_{L^{q_0}(\de,2\de; L^{p_0}({\mathbb R}^n_+ ))}\\
 & < \| u-u_1\|_{L^{q_0}(\de,2\de; L^{p_0}({\mathbb R}^n_+ ))}.
 \end{align*}
 This implies that
 $\| u-u_1\|_{L^{q_0}(\de,2\de; L^{p_0}({\mathbb R}^n_+ ))}=0$, that is, $u\equiv u_1$ in $\R_+\times [\de,2\de]$.
After iterating this procedure  infinitely, we obtain  the conclusion that $u=u_1$ in $\R_+\times (0,\infty)$.
Therefore, we conclude the proof of  the global in time uniqueness.

\subsection{Pressure  Estimate}

In this section, we prove Theorem \ref{maintheopressure}.

Let $\al,\be,p,p_1,p_0,q,q_1,q_0$ be the same as in  section \ref{uniform2}. If $\al>\frac{1}{p}$, then by   \eqref{main0910},  there is $P_0,P_j,p_1$ so that  $p=D_tp_0+\sum_{j=1}^{n-1}D_{x_j}P_j+p_1$ with
\begin{align*}
\| p_1\|_{L^{q_1}(0,\infty; \dot B^{\be}_{p_1q_1}(\R_+)) }+\sum_{j=1}^{n-1}\| P_j\|_{L^{q}(0,\infty; \dot B^\al_{pq}(\R_+)) }+\| P_0\|_{L^{q}(0,\infty; \dot B^{\al+1}_{pq}(\R_+)) }\\
  \leq   c \big( \|h\|_{ \dot B^{\al-\frac{2}{q}}_{pq,0}({\mathbb R}^{n}_+)}
+ \|(u\otimes u)\|_{L^{q_1} (0,\infty; \dot B^{\be}_{p_1q} (\R_+)) } \big)\\
\leq c_1 \big(\|h\|_{ \dot B^{\al-\frac{2}{q}}_{pq,0}({\mathbb R}^{n}_+)} +    \|u\|_{L^{q_0} (0,\infty; L^{p_0} (\R_+))} \|u\|_{L^q (0,\infty; \dot B^\al_{pq } (\R_+))} \big).
\end{align*} 
This completes the proof of the first pressure estimates.

\appendix
\setcounter{equation}{0}

\section{Proof of Lemma \ref{hemoz}}
\label{appendix.hemoz}

\begin{lemm}\label{lemma0523-3}

Let  $f = {\rm div} {\mathcal F} $(Here ${\mathcal F}=(F_{ki})_{k,i=1,\cdots, n}$, $f_i=D_{x_k}F_{ki}$) with    ${\mathcal F}|_{x_n =0}=0$.   Let $F_{k}=(F_{k1},\cdots,F_{kn})$. Then,
\begin{align*}
{\mathbb Q}_1  f(x) &=\sum_{ k \neq n} D_{x_k} {\mathbb Q}_1 F_k(x) +  D_{x_n} A(x),\\
{\mathbb Q}_2 f(x) &=
\sum_{  k \neq  n}  D_{x_k} {\mathbb Q}_2 F_k (x) - \sum_{ k \neq n} D^2_{x_k} B(x),
\end{align*}
where
\begin{align*}
A(x) & = -\int_{\R_+}\na_y \big( N(x-y) + N(x-y^*) \big)\cdot F_n(y) dy,\\
B(x) & = \int_{\Rn} N(x'-y',x_n) A(y',0) dy'.
\end{align*}

\end{lemm}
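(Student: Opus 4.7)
The plan is to substitute $f_i = D_{y_k} F_{ki}$ into the explicit representations of $\mathbb{Q}_1 f$ and $\mathbb{Q}_2 f$ recalled in Section \ref{projection}, then integrate by parts in $y_k$, splitting the sum over $k$ into the tangential contributions ($k \ne n$) and the normal one ($k = n$). The boundary hypothesis $\mathcal{F}|_{x_n=0}=0$ kills every boundary integral produced when the integration by parts is in $y_n$.

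For the $\mathbb{Q}_1 f$ identity I would begin from
\[
\mathbb{Q}_1 f(x) \;=\; -\int_{\R_+} D_{y_i}\bigl(N(x-y) - N(x-y^*)\bigr)\, D_{y_k} F_{ki}(y)\, dy
\]
and move $D_{y_k}$ off $F_{ki}$. For $k \ne n$ no boundary term appears, and the reflection rule $D_{y_k}\bigl(N(x-y) - N(x-y^*)\bigr) = -D_{x_k}\bigl(N(x-y) - N(x-y^*)\bigr)$ allows $D_{x_k}$ to be pulled out, so the remaining integral is $\mathbb{Q}_1 F_k(x)$. For $k = n$ the boundary term at $y_n = 0$ vanishes because $F_{ni}(y',0) = 0$, while the sign flip $D_{y_n}\bigl(N(x-y) - N(x-y^*)\bigr) = -D_{x_n}\bigl(N(x-y) + N(x-y^*)\bigr)$ converts the Dirichlet-type kernel into the Neumann-type one, producing exactly $D_{x_n} A(x)$.

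For the $\mathbb{Q}_2 f$ identity, both $f_n$ and $D_{y_n}\mathbb{Q}_1 f$ must be evaluated at $y_n = 0$. Since $F_{ki}(y',0) = 0$, tangential derivatives of $F_{kn}$ vanish on the boundary and $f_n(y',0) = D_{y_n} F_{nn}(y',0)$; applying the $\mathbb{Q}_1 f$ formula just established then yields
\[
D_{y_n}\mathbb{Q}_1 f(y',0) \;=\; \sum_{k \ne n} D_{y_k}\bigl[D_{x_n}\mathbb{Q}_1 F_k\bigr](y',0) + D_{x_n}^2 A(y',0).
\]
Plugging these into $\int_{\Rn} N(x'-y',x_n)\bigl[f_n - D_{y_n}\mathbb{Q}_1 f\bigr](y',0)\, dy'$, integrating by parts in $y_k$ on the $D_{y_k}$ terms, and using the observation $\int_{\Rn} N(x'-y',x_n)\, D_{x_n}\mathbb{Q}_1 F_k(y',0)\, dy' = -\mathbb{Q}_2 F_k(x)$ (which follows from $F_{kn}|_{y_n=0}=0$), produces the sum $\sum_{k \ne n} D_{x_k}\mathbb{Q}_2 F_k(x)$. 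The leftover combination $D_{y_n} F_{nn}(y',0) - D_{x_n}^2 A(y',0)$ is collapsed into the tangential Laplacian $\sum_{k \ne n} D_{y_k}^2 A(y',0)$ by the key identity $\Delta A = \nabla\cdot F_n$ in $\R_+$, which is obtained by applying $\Delta_x$ to the integral defining $A$ using $\Delta_x N(x-y) = \delta(x-y)$ and the fact that $\Delta_x N(x-y^*)$ has no mass in $\R_+$; restricting to $y_n=0$ and using $F_{ni}(y',0)=0$ for $i \ne n$ isolates the needed relation between $D_{y_n}F_{nn}(y',0)$ and the second derivatives of $A$ on the boundary. A final double integration by parts against $N(x'-y',x_n)$ then converts the resulting integral into the claimed expression involving $D_{x_k}^2 B(x)$.

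The principal obstacle is this last boundary identity together with sign bookkeeping: the reflection rule that differentiates $k=n$ from $k \ne n$, the tangential integrations by parts in $y_k$ (painless) versus the normal one in $y_n$ (which requires $\mathcal{F}|_{x_n=0}=0$ to suppress trace terms), and the interpretation of $A$ as a Neumann potential whose Laplacian recovers $\nabla\cdot F_n$. The remaining ingredients---evenness of $N$, oddness of $\partial_n N$ in its last argument, and the boundary behaviour of the Dirichlet and Neumann Green's functions on $\{x_n=0\}$---are standard, so the proof is essentially an exercise in careful accounting.
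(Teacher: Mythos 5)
Your proposal follows essentially the same route as the paper's own proof: substitute $f_i=D_{y_k}F_{ki}$ into the representations \eqref{0427-1}--\eqref{0427-2}, integrate by parts using the reflection identities for $N(x-y)$ and $N(x-y^*)$ with the boundary terms killed by ${\mathcal F}|_{x_n=0}=0$, use $\De A=\mbox{div}\,F_n$ to rewrite $D_{y_n}^2A$, identify $\int_{\Rn}N(x'-y',x_n)D_{y_n}{\mathbb Q}_1F_k(y',0)\,dy'$ with $-{\mathbb Q}_2F_k$ via $F_{kn}|_{y_n=0}=0$, and pass the tangential derivatives of $A$ through the kernel to produce the $D_{x_k}^2B$ term. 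The only difference is cosmetic bookkeeping: the paper keeps $f_n(y',0)$ and lets it cancel inside the definition of ${\mathbb Q}_2f$, while you evaluate $f_n(y',0)=D_{y_n}F_{nn}(y',0)$ and invoke the boundary restriction of $\De A=\mbox{div}\,F_n$ --- the same computation.
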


\begin{proof}

From \eqref{0427-1}, we have
\begin{align}\label{0424-1}
\notag {\mathbb Q}_1  f(x) 
\notag &= - \sum_{ k  \neq n} D_{x_k}\int_{\R_+} \na_y \big( N(x-y) - N(x-y^*) \big) \cdot F_k(y) dy\\
\notag  &  \qquad-  D_{x_n} \int_{\R_+}\na_y \big( N(x-y) + N(x-y^*) \big)\cdot F_n(y) dy\\
&:=\sum_{ k \neq n} D_{x_k} {\mathbb Q}_1 F_k(x) +  D_{x_n} A(x).
\end{align}

Since $\De A = {\rm div} \, F_n = f_n$, from \eqref{0424-1}, we have  $D_{y_n}{\mathbb Q}_1f (y) = D_{y_n} \sum_{k \neq n} D_{y_k} {\mathbb Q}_1 F_k(y)  +  f_n (y) - \De' A(y) $. Hence, we have
\begin{align}\label{0501-1}
\notag \int_{\Rn} N(x' -y', x_n) D_{y_n} {\mathbb Q}_1f (y',0) dy'  
\notag & = \sum_{  k \neq  n}  D_{x_k} \int_{\Rn} N(x'-y',x_n)   D_{y_n} {\mathbb Q}_1 F_k(y',0)  dy'\\
\notag &\qquad  + \int_{\Rn} N(x'-y',x_n)f_n(y',0) dy'\\
&\qquad  + \sum_{ k \neq n} D_{x_k} \int_{\Rn} N(x'-y',x_n) D_{y_k}A(y',0) dy'.
\end{align}
Hence, from \eqref{0427-2} and  \eqref{0501-1}, we have
\begin{align} \label{Q-2-1}
\notag{\mathbb Q}_2 f(x) &=
-\sum_{  k \neq  n}  D_{x_k} \int_{\Rn} N(x'-y',x_n)  D_{y_n} {\mathbb Q}_1 F_k(y',0)  dy'\\
&\qquad  - \sum_{ k \neq n} D_{x_k} \int_{\Rn} N(x'-y',x_n) D_{y_k}A(y',0) dy'.
\end{align}

%
\end{proof}

Now, we will show that for nonnegative integers $k$,
\begin{align}\label{0413-1}
\| {\mathbb Q} f\|_{\dot W^k_{p} (\R_+)} \leq c \|{\mathcal F}\|_{\dot W^k_p (\R_+)}.
\end{align}
Using the property of complex interpolation, \eqref{0413-1} implies lemma \ref{hemoz}.

From Lemma \ref{lemma0523-3}, 
 we have
\begin{align*}
\| {\mathbb Q} f\|_{\dot W^k_p (\R_+)} &\leq c \big(
    \sum_{k\neq n}\| D_{x_k} {\mathbb Q}_1F_k \|_{\dot W^k_p(\R_+)}  +  \sum_{k\neq n}\| D_{x_k} {\mathbb Q}_2 F_k \|_{\dot W^k_p(\R_+)} \\
    &\qquad+ \|D_{x_n} A\|_{\dot W^k_p (\R_+)}  +  \sum_{k\neq n} \|D^2_{x_k} B \|_{\dot W^k_p(\R_+)} \big),
\end{align*}
where ${\mathbb Q}_1$ and ${\mathbb Q}_2$ are defined in \eqref{0427-1} and \eqref{0427-2}, and $A$ and $B$ are defined in Lemma \ref{lemma0523-3}, respectively. From   Lemma \ref{lemma0929-22}, we have
\begin{align*}
\| D_{x_k} {\mathbb Q}_1 F_k\|_{\dot W^k_p (\R_+)}  \leq c \|  F\|_{ \dot W^k_p (\R_+)}, \qquad \| D_{x_n} A \|_{ \dot W^k_p(\R_+)}   \leq c \|  F\|_{ \dot W^k_p(\R_+)}.
\end{align*}
From \eqref{Poisson},  Lemma \ref{trace} and  Lemma \ref{lemma0929-22} continuously, we get
\begin{align*}
\|D^2_{x_k} B \|_{\dot W^k_p (\R_+)} \leq  c \| A|_{x_n =0} \|_{\dot B^{k+1  -\frac1p}_{pp} (\Rn)} \leq  c \| D_x A \|_{\dot W^k_p (\R_+)} \leq c \| F_n\|_{\dot W^k_p (\R_+)}.
\end{align*}
From \eqref{Poisson}, we have
  \begin{align*}
 \| D_{x_k}  {\mathbb Q}_2 F_k \|_{\dot W^k_p (\R_+)} \leq c  \|\big( F_k - \na {\mathbb Q}_1F_k \big)_n|_{x_n =0} \|_{\dot B^{k-\frac1p}_{pp} (\Rn)}.
  \end{align*}
If $k > 0$, then by (1) of  Lemma \ref{trace}, we have
 \begin{align*}
\|\big( F_k - \na {\mathbb Q}_1F_k \big)_n|_{x_n =0} \|_{\dot B^{k -\frac1p}_{pp} (\Rn)} \leq  \|F_k - \na {\mathbb Q}_1F_k  \|_{\dot W^k_p (\R_+)}\leq c \|F   \|_{\dot W^k_p (\R_+)}.
 \end{align*}
 If $k =0$, then since $F_k - \na {\mathbb Q}_1F_k  $ is  divergence free in $\R_+$, by (2) of Lemma \ref{trace}, we have
 \begin{align*}
\|\big( F_k - \na {\mathbb Q}_1F_k \big)_n|_{x_n =0} \|_{\dot B^{ -\frac1p}_{pp} (\Rn)} \leq  \|F_k - \na {\mathbb Q}_1F_k  \|_{ L^p (\R_+)}\leq c \|F   \|_{L^p (\R_+)}.
 \end{align*}
Therefore we obtain the estimate \eqref{0413-1}.

Using the properties of real interpolation an complex interpolation (see \eqref{interpolation1}), we complete the proof of Lemma  \ref{hemoz}.

\section{Proof of Lemma \ref{0929-1}}

\label{appendix0292-1}

Since the proofs will be done  by the same way, we  prove only the case of $ \Ga^* * {\mathbb P}f$.

Recall that  $({\mathbb P}\, f)_j = f_j - D_{x_j} {\mathbb Q}f=\mbox{div}F_j-D_{x_j} {\mathbb Q}f$. For $1 \leq j \leq n-1$  we have
\begin{align}\label{0421-1}
\notag \Ga^* * ({\mathbb P}f )_j (x,t) &= -\int_0^t \int_{{\mathbb R}^n_+}
 \nabla_{y} \Ga(x-y^*, t-\tau) \cdot  F_{j}(y,\tau) dyd\tau \\
&\quad +  \int_0^t \int_{{\mathbb R}^n_+}
D_{y_j} \Ga(x-y^*, t-\tau) {\mathbb Q}f (y,\tau) dyd\tau.
\end{align}
From the relation  $D_{x_n}^2 A = -\De' A + \sum_{k=1}^nD_{x_k} F_{nk}$ and    from Lemma \ref{lemma0523-3}, we have  the identity $({\mathbb P}\, f)_n= \Delta' A+D_{x_n}\Delta'B-\sum_{k\neq n}D_{x_n}D_{x_k}QF_k$. Hence  we have
\begin{align}\label{0421-2}
\notag \Ga^* * ({\mathbb P}\, f)_n (x,t)
&= 
 \sum_{1 \leq k \leq n-1}   \int_0^t \int_{{\mathbb R}^n_+}  D_{y_k}\Ga(x-y^*, t-\tau)D_{y_n} {\mathbb Q}  F_k(y,\tau) dyd\tau\\
\notag &\quad +  \int_0^t \int_{{\mathbb R}^n_+} \na_{x'} \Ga(x-y^*, t-\tau)\cdot \na_{y'}A(y,\tau) dyd\tau\\
&\quad +  \int_0^t \int_{{\mathbb R}^n_+} \na'_{x'} \Ga(x-y^*, t-\tau)\cdot \na_{y'} D_{y_n}B(y,\tau) dyd\tau.
\end{align}

Using \eqref{0421-1}, \eqref{0421-2}, Young's inequality and the proof of  Lemma \ref{hemoz} ($\al =0$),  for  $t_1 \leq t_0$, we have
\begin{align}\label{0508-1}
\notag \|\Ga^* * ({\mathbb P}\, f)(t)\|_{L^{p}(\R_+) }
& \leq  c \int_0^t (t-s)^{-\frac12 -\frac{n}2 (\frac1{t_1} -\frac1{p})}( \sum_{j=1}^{n-1}\| F_j(s)\|_{L^{t_1}(\R_+)} + \|{\mathbb Q} f(s)\|_{L^{t_1}(\R_+)}\\
\notag&\qquad+\sum_{j=1}^{n-1}\| D_{y_n}{\mathbb Q} F_j(s)(s)\|_{L^{t_1}(\R_+)}+\| \nabla A(s)\|_{L^{t_1}(\R_+)}+\|\nabla^2B(s)\|_{L^{t_1}(\R_+)}) ds \\
&\leq  c \int_0^t (t-s)^{-\frac12 -\frac{n}2 (\frac1{t_1} -\frac1{p})} \| {\mathcal F}(s)\|_{L^{t_1}(\R_+)} ds.
\end{align}
Hence, from Hardy-Littlewood-Sobolev inequality (see \cite{Tr}), 
we have
\begin{align}
\label{h1}
\|\Ga^* * ({\mathbb P}\, f) \|_{L^{s_0}(0,\infty; L^{p} (\R_+))} \leq c \| {\mathcal F}\|_{L^{q_1}(0, \infty;  L^{t_1}(\R_+))}
\end{align}
when
\begin{align}\label{0622-1}
\frac1{s_0} + 1 = \frac1{q_1} + \frac12  +\frac{n}2 (\frac1{t_1} -\frac1{p}), \quad q_1 \leq s_0,\quad  t_1\leq p.
\end{align}
Hence, we proved   Lemma \ref{0929-1} for $\al =0$.

As the same reason to \eqref{h1}, we have
\begin{align}\label{h2}
\notag\|\nabla \Ga^* * ({\mathbb P}\, f) \|_{L^{s_0}(0,\infty; L^{p} (\R_+))} & \leq c \big( \|  f\|_{L^{q_1}(0, \infty;  L^{t_1}(\R_+))} + \|\nabla \, {\mathbb Q} \, f\|_{L^{q_1}(0, \infty;  L^{t_1}(\R_+))}\big)\\
& \leq c \|\nabla {\mathcal F}\|_{L^{q_1}(0, \infty;  L^{t_1}(\R_+))}.
\end{align}
Using the property of  real  interpolation to \eqref{h1} and \eqref{h2}, we have
\begin{align}
\label{h3}
\| \Ga^* * ({\mathbb P}\, f) \|_{L^{s_0}(0,\infty; \dot{B}^\beta_{pq} (\R_+))} \leq c \| {\mathcal F}\|_{L^{q_1}(0, \infty;  \dot{B}^\beta_{t_1q}(\R_+))}
\end{align}
for $0<\beta<1$ and $ t_1 \leq p, q_1\leq s_0$ satisfying  \eqref{0622-1}.


By parabolic type's Calderen-Zygmund Theorem and Lemma \ref{hemoz}, we have
\begin{align}\label{0418-3}
\notag \| \nabla^2\Ga^* * ({\mathbb P}\, f)\|_{L^{q_1}(0,\infty; L^{p} (\R_+) } \leq  c \| {\mathbb P}\, f\|_{L^{q_1}(0,\infty; L^{p}(\R_+)) }\\
\leq  c \big(  \|   f\|_{L^{q_1}(0,\infty; L^{p}(\R_+)) }  + \| \na  {\mathbb Q}\, f\|_{L^{q_1}(0,\infty; L^{p}(\R_+)) }    \big)
 \leq   c \|  {\mathcal F}\|_{L^{q_1}(0,\infty; \dot W^1_{p}(\R_+))}
\end{align}
and
\begin{align}\label{h4}
 \| \nabla \Ga^* * ({\mathbb P}\, f)\|_{L^{q_1}(0,\infty; L^{p} (\R_+) } \leq    c \|  {\mathcal F}\|_{L^{q_1}(0,\infty; L^{p}(\R_+))}.
\end{align}
By the property of complex interpolation  to \eqref{0418-3} and \eqref{h4} we have
\begin{align}\label{h5}
 \| \Ga^* * ({\mathbb P}\, f)\|_{L^{q_1}(0,\infty; \dot B^{\be+1}_{pq} (\R_+) } \leq    c \| {\mathcal  F}\|_{L^{q_1}(0,\infty; \dot B^\be_{pq}(\R_+))},\ 0<\be<1.
\end{align}
Let $\te = 1+\be - \al$, $\frac{1 -\al +\be}{s_0} = \frac1q -\frac{\al -\be}{q_1}$ and $\frac{1 -\al +\be}{p_0} = \frac1{p_1} -\frac{\al -\be}{p}$. By the assumptions of lemma \ref{0929-1}, we get $0 \leq \te  \leq  1$ and $ t_1  \leq  p, q_1 \leq  s_0$ satisfy  \eqref{0622-1}. By the complex interpolation with exponent $\te$ to \eqref{h3} and \eqref{h5}, for $1 < \be <\al <2$, we have
\begin{align*}
\|\Ga^* * ({\mathbb P}\, f) \|_{L^q(0, \infty; \dot B^\al_{p}(\R_+))  }  \leq c \| {\mathcal F} \|_{L^{q_1}(0,\infty; \dot B^{\be}_{p_1 }(\R_+))  }.
\end{align*}
This completes the proof of   Lemma \ref{0929-1}.

\section{Proof of Lemma \ref{lemma-1-2-zero}}
\label{appendix.lemma-1-2-zero}

Recalling the formula
\eqref{expression-p-zero} with \eqref{formulas-p}, we split $\pi$ in two terms,
i.e. $\pi(x,t) = \pi_1 (x,t) + \pi_2(x,t)$, where
\begin{align*}
\pi_1(x,t) =4 \sum_{1 \leq j \leq n-1}\int_{{\mathbb R}^{n-1}}
\frac{\pa^2 N(x'-z',x_n)}{\pa x_j \pa x_n}  \int_{{\mathbb
R}^n_+} \Ga(z' -y', y_n, t ) h_j(y, \tau) dy dz',
\end{align*}
\begin{align*} 
\pi_2(x,t) =4 \sum_{1 \leq j \leq n-1}\int_{{\mathbb R}^{n-1}}
\frac{\pa N(x'-z',x_n)}{ \pa x_j}  \int_{{\mathbb R}^n_+}
\frac{\pa \Ga(z' -y', y_n, t)}{\pa y_n}h_j(y,\tau) dy dz'.
\end{align*}

Note that $\pi_1$ is represented by
\begin{align*}
\pi_1(x,t) = 4 \sum_{1 \leq j \leq n-1}  D_{x_j}  D_{x_n} N (\Ga *_t
h_j|_{x_n =0})(x)= 4 \sum_{1 \leq j \leq n-1} D_{x_j} \pi_{1j}(x,t),
\end{align*}
where $\pi_{1j}(x,t) = 4  D_{x_n} N (\Ga *_t
h_j|_{x_n =0}).$
From \eqref{Poisson},   Lemma \ref{trace} and Lemma \ref{gauss.equiv}, for  $\al>\frac{1}{p}$ we have
\begin{align}
\notag  \| \pi_{1j} \|_{ L^q(0,\infty; \dot B^{\al}_{pq}(\R_+))}
 \leq c
            \|  \Ga_t * h_j|_{x_n =0}  \|_{ L^q(0,\infty; \dot B^{\al -\frac1p}_{pq} ({\mathbb R}^{n-1})}\\
  \leq  c  \|   \Ga_t * h_j \|_{ L^q(0,\infty; \dot B^{\al}_{pq}(\R_+))}
  \leq c \| h_j\|_{\dot{B}^{\al-\frac{2}q}_{pq,0}(\R_+) }.
\end{align}

By integrating by parts, by the identity $D_{y_n}h_n=-\sum_{j=1}^{n-1}D_{y_j}h_j$ and by the identity $D_{y_n}^2 \Gamma_t*h_n=D_t\Gamma_t*h_n-\Delta_{y'}\Gamma_t*h_n$, $\pi_2$ can be rewritten by
\begin{align*}
\pi_2&=-4\int_{\Rn}N(x'-z',x_n)\int_{\R_+}D_{y_n}\Gamma(z'-y',y_n,t)D_{y_n}h_n(y)dydz'\\
&=D_t\Big(4\int_{\Rn}N(x'-z',x_n)\int_{\R_+}\Gamma(z'-y',y_n,t)h_n(y)dydz')\\
&\quad+4\sum_{k=1}^{n-1}D_{x_k}^2\int_{\Rn}N(x'-z',x_n)\int_{\R_+}\Gamma(z'-y',y_n,t)h_n(y)dydz'\\
&=D_t\pi_{00}+\sum_{k\neq n}D_{x_k}\pi_{2k}.
\end{align*}
From \eqref{Poisson},   Lemma \ref{trace} and Lemma \ref{gauss.equiv}, for  $\al>\frac{1}{p}$ we have
\begin{align}
\notag  \| \pi_{2k} \|_{ L^q(0,\infty; \dot B^{\al}_{pq}(\R_+))}
 \leq c
            \| \Ga_t * h_n|_{x_n =0}  \|_{ L^q(0,\infty; \dot B^{\al -\frac1p}_{pq} ({\mathbb R}^{n-1})}\\
  \leq  c  \|   \Ga_t * h_n \|_{ L^q(0,\infty; \dot B^{\al}_{pq}(\R_+))}
  \leq c \| h_n\|_{\dot{B}^{\al-\frac{2}q}_{pq,0}(\R_+) }
\end{align}
and
\begin{align}
\notag  \| \pi_{00} \|_{ L^q(0,\infty; \dot H^{\al+1}_p(\R_+))}
 \leq c
            \|  \Ga_t * h_n|_{x_n =0}  \|_{ L^q(0,\infty; \dot B^{\al -\frac1p}_p ({\mathbb R}^{n-1})}\\
  \leq  c  \|   \Ga_t * h_n \|_{ L^q(0,\infty; \dot H^{\al}_p(\R_+))}
  \leq c \| h_n\|_{\dot{B}^{\al-\frac{2}q}_{pq,0}(\R_+) }.
\end{align}
This completes  the proof of Lemma \ref{lemma-1-2-zero}.

\section{Proof of Lemma \ref{lemma-1-2}}
\label{appendix.lemma-1-2}

Recalling the formulae
\eqref{expression-p} with \eqref{formulas-p}, we split $\Pi_0$ in two terms,
i.e. $\Pi_0(x,t) = \Pi_1 (x,t) + \Pi_2(x,t)$, where
\begin{align*}
\Pi_1(x,t) =4 \sum_{1 \leq j \leq n-1}\int_{{\mathbb R}^{n-1}}
\frac{\pa^2 N(x'-z',x_n)}{\pa x_j \pa x_n} \int_0^t \int_{{\mathbb
R}^n_+} \Ga(z' -y', y_n, t -\tau) ({\mathbb P} f)_j(y, \tau) dyd\tau dz',
\end{align*}
\begin{align*} 
\Pi_2(x,t) =4 \sum_{1 \leq j \leq n-1}\int_{{\mathbb R}^{n-1}}
\frac{\pa N(x'-z',x_n)}{ \pa x_j} \int_0^t \int_{{\mathbb R}^n_+}
\frac{\pa \Ga(z' -y', y_n, t-\tau)}{\pa y_n}({\mathbb P} f)_j(y,\tau) dyd\tau dz'.
\end{align*}

Note that $\Pi_1$ is represented by
\begin{align*}
\Pi_1(x,t) = 4 \sum_{1 \leq j \leq n-1}  D_{x_j}  D_{x_n} N (\Ga *
({\mathbb P} f)_j|_{x_n =0})(x,t)= 4 \sum_{1 \leq j \leq n-1} D_{x_j} \Pi_{1j}(x,t),
\end{align*}
where $\Pi_{1j}(x,t) = 4  D_{x_n} N (\Ga *
({\mathbb P} f)_j|_{x_n =0}).$
From \eqref{Poisson},   Lemma \ref{trace} and Lemma \ref{0929-1}, for  $\al>\frac{1}{p}$ we have
\begin{align}
\notag  \| \Pi_{1j} \|_{ L^q(0,\infty; \dot H^{\al}_p(\R_+))}
 \leq c
            \|  \Ga * ({\mathbb P} f)_j|_{x_n =0}  \|_{ L^q(0,\infty; \dot B^{\al -\frac1p}_p ({\mathbb R}^{n-1})}\\
  \leq  c  \|   \Ga * ({\mathbb P} f)_j \|_{ L^q(0,\infty; \dot H^{\al}_p(\R_+))}
  \leq c \| {\mathcal F}\|_{L^{q_1} (0,\infty;  \dot H^{\be }_{p_1}(\R_+)) }.
\end{align}

By integrating by parts, by the identity $D_{y_n}({\mathbb P}f)_n=-\sum_{j=1}^{n-1}D_{y_j}({\mathbb P}f)_j$ and by the identity $D_{y_n}^2 \Gamma_t*({\mathbb P}f)_n=D_t\Gamma_t*({\mathbb P}f)_n-\Delta_{y'}\Gamma_t*({\mathbb P}f)_n$, $\Pi_2$ can be rewritten by
\begin{align*}
\Pi_2&=-4\int_{\Rn}N(x'-z',x_n)\int^t_0\int_{\R_+}D_{y_n}\Gamma(z'-y',y_n,t-s)D_{y_n}({\mathbb P}f)_n(y,s)dydsdz'\\
&=D_t\Big(4\int_{\Rn}N(x'-z',x_n)\int_{\R_+}\Gamma(z'-y',y_n,t)h_n(y)dydz')\\
&\quad+4\sum_{k=1}^{n-1}D_{x_k}^2\int_{\Rn}N(x'-z',x_n)\int^t_0\int_{\R_+}\Gamma(z'-y',y_n,t-s)({\mathbb P}f)_n(y,s)dydsdz'\\
&=D_t\Pi_{00}+\sum_{k\neq n}D_{x_k}\Pi_{2k}.
\end{align*}
From \eqref{Poisson},   Lemma \ref{trace} and  Lemma \ref{0929-1}, for  $\al>\frac{1}{p}$ we have
\begin{align}
\notag  \| \Pi_{2k} \|_{ L^q(0,\infty; \dot H^{\al}_p(\R_+))}
 \leq c
            \| \Ga *({\mathbb P}f)_n|_{x_n =0}  \|_{ L^q(0,\infty; \dot B^{\al -\frac1p}_p ({\mathbb R}^{n-1})}\\
  \leq  c  \|   \Ga *({\mathbb P}f)_n \|_{ L^q(0,\infty; \dot H^{\al}_p(\R_+))}
  \leq c \| {\mathcal F}\|_{L^{q_1}(0,\infty; \dot H^{\be}_{p_1}(\R_+) }
\end{align}
and
\begin{align}
\notag  \| \Pi_{00} \|_{ L^q(0,\infty; \dot H^{\al+1}_p(\R_+))}
 \leq c
            \|  \Ga *({\mathbb P}f)_n|_{x_n =0}  \|_{ L^q(0,\infty; \dot B^{\al -\frac1p}_p ({\mathbb R}^{n-1})}\\
  \leq  c  \|   \Ga * ({\mathbb P}f)_n \|_{ L^q(0,\infty; \dot H^{\al}_p(\R_+))}
  \leq c \| {\mathcal F}\|_{L^{q_1}(0,\infty; \dot H^{\be}_{p_1}(\R_+) }.
\end{align}
This completes  the proof of Lemma \ref{lemma-1-2}.

\end{document}